\documentclass[12pt]{amsart}
\usepackage{amsmath}
\allowdisplaybreaks

\usepackage{array}
\usepackage[a4paper, margin=3cm]{geometry}
\usepackage{multirow}
\usepackage{graphicx}
\usepackage{xcolor}
\usepackage{float}
\usepackage{caption}
\usepackage{amssymb}
\usepackage{tikz}
\usepackage[colorlinks=true,
    linkcolor=teal,      
    citecolor=magenta,       
    urlcolor=magenta,    
    filecolor=green      
]{hyperref}
\usepackage[nameinlink]{cleveref}
\usepackage[utf8]{inputenc}
\usepackage[T1]{fontenc}
\usepackage{lmodern}
\usepackage{indentfirst}
\usepackage{fancyhdr}
\usepackage{stmaryrd}
\usepackage{titlesec}

\makeatletter
\providecommand{\@secnumfont}{}
\providecommand{\@secnumpunct}{. }
\makeatother

\usetikzlibrary{positioning}

\newtheorem{theorem}{\bf Theorem}[section]
\newtheorem{proposition}[theorem]{\bf Proposition}
\newtheorem{lemma}[theorem]{\bf Lemma}
\newtheorem{corollary}[theorem]{\bf Corollary}

\newtheorem*{theorem*}{\bf Theorem}

\newtheorem{remark}[theorem]{\bf Remark}

\newcommand{\field}{K}
\renewcommand{\hom}{\mathrm{Hom}}
\newcommand{\diag}{\mathrm{diag}}
\newcommand{\rank}{\mathrm{rk}}

\title{DEGENERATION ORDER OF $3\times 3$ NILPOTENT MATRIX TUPLES}

\author[M\'aty\'as Domokos]
{M\'aty\'as Domokos}
\address{HUN-REN Alfréd Rényi Institute of Mathematics,
Reáltanoda utca 13-15, 1053 Budapest, Hungary,
ORCID iD: https://orcid.org/0000-0002-0189-8831}
\email{domokos.matyas@renyi.hu}
\author[Botond Mikl\'osi]{Botond Mikl\'osi} 
\address{E\"otv\"os Lor\'and University, 
Pázmány Péter sétány 1/C, 1117 Budapest, Hungary} 
\email{miklosiboti@gmail.com}
\date{}


\begin{document}
\thanks{Partially supported by the Hungarian National Research, Development and Innovation Office,  NKFIH K 138828.}
\subjclass[2020]{Primary 14L24; Secondary 14L30, 16G20}
\keywords{orbit closure, degeneration order, nullcone, nilpotent matrix tuples}

\begin{abstract}
The degeneration order of simultaneous similarity classes 
of $3\times 3$ nilpotent matrix tuples is determined, and is  
shown to be given by rank conditions. 
\end{abstract}

\maketitle 

\def\lt{\mathrm{B}} 
\def\slt{\mathfrak{n}} 
\def\GL{\mathrm{GL}} 
\def\lhom{\le_{\mathrm{hom}}}
\def\ldeg{\le_{\mathrm{deg}}}
\def\lshom{<_{\mathrm{hom}}}

\section{Introduction}\label{sec:basic definitions}

\subsection{Basic definitions.}

For positive integers $n\ge 2$, $m$,  
denote by $(\field^{n\times n})^m$ the space of $m$-tuples of $n\times n$ matrices 
over an algebraically closed field $\field$.  
The general linear group $\GL_n(\field)$ acts on 
$(\field^{n\times n})^m$ by simultaneous conjugation: 
for $g\in \GL_n(\field)$ and $A=(A_1,\dots,A_m)$ 
we have $g\cdot A=(gA_1g^{-1},\dots,gA_mg^{-1})$. 
We denote by $\mathcal{O}(A)$ the 
$\GL_n(\field)$-orbit of $A$. 
We say that the matrix tuples $A$ and $B$ are \emph{similar} 
if $\mathcal{O}(A)=\mathcal{O}(B)$. 
Write $\overline{\mathcal{O}(A)}$ for the closure of $\mathcal{O}(A)$ in $(\field^{n\times n})^m$ 
with respect to the Zariski topology. 

We shall consider two preorders on $(\field^{n\times n})^m$. 
We write $A\ldeg B$ and say that \emph{$A$ degenerates to $B$} if 
$B\in \overline{\mathcal{O}(A)}$ (or equivalently, 
$\mathcal{O}(B)\subseteq \overline{\mathcal{O}(A)}$). 
Clearly, $\ldeg$ induces a partial ordering on the set of $\GL_n(\field)$-orbits in 
$(\field^{n\times n})^m$, called the \emph{degeneration order}. 

Let us turn to the second preorder. Let 
$R:=\field \langle x_1,\dots,x_m\rangle$ be the free 
associative $\field$-algebra on $m$ generators $x_1,\dots,x_m$ 
(with $1\in R$). 
Given an $f\in R$ and $A\in (\field^{n\times n})^m$ 
it makes sense to substitute in $f$ the non-commuting variables $x_i$ by $A_i$, and we write 
$f(A)$ for the resulting $n\times n$ matrix. 
Moreover, given an arbitrary matrix 
$\varphi=(\varphi_{ij})_{i=1,\dots,k}^{j=1,\dots,l}\in R^{k\times l}$, 
set $\varphi(A):=(\varphi_{ij}(A))_{i=1,\dots,k}^{j=1,\dots,l}$, which is a 
$kn\times ln$ matrix over $\field$, partitioned into $k\times l$ blocks of size $n\times n$ 
(and $\varphi_{ij}(A)$ is the $n\times n$ block in the $(i,j)$ position). 
Denote by $\rank(\varphi(A))$ the \emph{rank} of the $kn\times ln$ matrix $\varphi(A)$ 
over $\field$. 
Now for $A,B\in (\field^{n\times n})^m$ we set 
$A\lhom B$ if $\rank\varphi(A)\ge \rank\varphi(B)$ 
for all possible matrices $\varphi$ over $R$. 
 By theorems of Auslander \cite{Auslander} and Riedtmann \cite{Riedtmann1} 
(see Section~\ref{sec:module theory}), 
$A\lhom B$ and $B\lhom A$ both hold if and only if 
$\mathcal{O}(A)=\mathcal{O}(B)$. So 
$\lhom$ also induces a partial ordering (denoted by $\lhom$ as well) 
on the set of $\GL_n(\field)$-orbits in $(\field^{n\times n})^m$. 
For reasons explained in Section~\ref{sec:module theory}, 
this is called the \emph{hom-order}. 
We shall use the notation $A\lshom B$ if $A\lhom B$ and 
$\mathcal{O}(A)\neq \mathcal{O}(B)$. 

We have the following well known lemma: 

\begin{lemma}\label{lemma:deg implies hom} 
$A\ldeg B$ implies $A\lhom B$ for $A,B\in (\field^{n\times n})^m$.  
\end{lemma}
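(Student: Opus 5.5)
The plan is to show that for each fixed matrix $\varphi\in R^{k\times l}$ and each integer $r$, the set
\[
Z_{\varphi,r}:=\{C\in (\field^{n\times n})^m : \rank\varphi(C)\le r\}
\]
is Zariski closed and $\GL_n(\field)$-stable. Once this is known, taking $r=\rank\varphi(A)$ gives $\mathcal{O}(A)\subseteq Z_{\varphi,r}$, hence $\overline{\mathcal{O}(A)}\subseteq Z_{\varphi,r}$. So every $B\in\overline{\mathcal{O}(A)}$ satisfies $\rank\varphi(B)\le\rank\varphi(A)$. Since $\varphi$ is arbitrary, $A\ldeg B$ implies $A\lhom B$.

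There are two ingredients.

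\emph{Closedness.} Each entry of the $kn\times ln$ matrix $\varphi(C)$ is a polynomial function of the entries of $C_1,\dots,C_m$. This holds because each $\varphi_{ij}$ is a noncommutative polynomial, and substituting matrices into it only involves sums and products of entries. The condition $\rank\varphi(C)\le r$ is equivalent to the vanishing of all $(r+1)\times(r+1)$ minors of $\varphi(C)$. These minors are polynomials in the coordinates of $C$, so $Z_{\varphi,r}$ is Zariski closed.

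\emph{Invariance.} For $g\in\GL_n(\field)$ we have $f(gCg^{-1})=gf(C)g^{-1}$ for every $f\in R$, since conjugation is an algebra automorphism of $\field^{n\times n}$ fixing $1$. Therefore
\[
\varphi(g\cdot C)=\diag(g,\dots,g)\,\varphi(C)\,\diag(g^{-1},\dots,g^{-1}),
\]
with $k$ copies of $g$ on the left and $l$ copies of $g^{-1}$ on the right. Multiplying by invertible matrices does not change rank, so $\rank\varphi(g\cdot C)=\rank\varphi(C)$. In particular the rank is constant on $\mathcal{O}(A)$, which is what places $\mathcal{O}(A)$ inside $Z_{\varphi,r}$.

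No step is a genuine obstacle. The only point needing care is the observation that the rank condition is an upper bound defined by minors, so it is closed, and not a lower bound, which would only be open. This is exactly why ranks can only drop under degeneration, which is the direction the statement requires.
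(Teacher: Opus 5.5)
Your proposal is correct and follows essentially the same route as the paper's proof. Both arguments show that $\rank\circ\varphi$ is constant on $\GL_n(\field)$-orbits (you via block-diagonal conjugation, which is the paper's $\GL_{nk}(\field)\times\GL_{nl}(\field)$-orbit observation), and both show that the sublevel set $\{\rank\varphi\le r\}$ is Zariski closed because it is cut out by the $(r+1)\times(r+1)$ minors.
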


\begin{proof} 
For any $\varphi\in R^{k\times l}$, $A\in (\field^{n\times n})^m$, and $g\in \GL_n(\field)$, 
$\varphi(g\cdot A)$ belongs to the $\GL_{nk}(\field)\times \GL_{nl}(\field)$-orbit of 
$\varphi(A)$, hence the function $\rank\circ \varphi$ is constant along the $\GL_n(\field)$-orbits in 
$(\field^{n\times n})^m$. On the other hand, for a fixed non-negative integer $r$, the set 
$\{B\in (\field^{n\times n})^m\mid \rank\varphi(B)\le r\}$ is the common zero locus of the determinants 
of the minors of size $(r+1)\times (r+1)$ in $\varphi(B)$, hence is Zariski closed. Thus if 
$B$ belongs to the Zariski closure of the $\GL_n(\field)$-orbit of $A$, then 
$\rank\varphi(B)\le \rank\varphi(A)$. 
\end{proof}

\subsection{Motivation.} 
The study of orbit closures is motivated by several areas of mathematics.  
For example, basic problems of Geometric Complexity Theory ask to decide if an orbit under a linear action of a reductive group (say the general linear group) is contained in the Zariski closure of another orbit, 
see for example \cite{landsberg}. So it is a natural and challenging problem 
to characterize the preorder $\ldeg$. A possible candidate for such a characterization is offered 
by $\lhom$ (see \cite{CH85},\cite{HL03}). However, 
the converse of Lemma~\ref{lemma:deg implies hom} does not hold in general, 
as an example attributed to Carlson in \cite{Riedtmann1} shows (see also \cite{DKMV}, 
and Remark~\ref{remark:n>3} below). 
Note also that the problem of classifying the similarity classes in $(\field^{n\times n})^2$ for all $n$ 
is \emph{wild} in the sense of \cite{Drozd}, but some approaches to it are given in 
\cite{Bongartz-Friedland}, \cite{lebruyn2}, \cite{Belitskii}. 

On the other hand, there are some notable partial converses for Lemma~\ref{lemma:deg implies hom} 
in the theory of finite dimensional modules over a finite dimensional associative algebra. 
Note that for a fixed finite dimensional associative algebra $\Lambda$ generated by $m$ elements, 
the space $(\field^{n\times n})^m$ contains a $\GL_n(\field)$-invariant affine subvariety 
denoted by $\mathrm{Rep}_\Lambda(n)$, such 
that the $\GL_n(\field)$-orbits in $\mathrm{Rep}_\Lambda(n)$ are in bijection with the isomorphism classes of 
$n$-dimensional $\Lambda$-modules. Under certain restrictions for $\Lambda$, it is true that the 
restriction of $\lhom$ to $\mathrm{Rep}_\Lambda(n)$ implies the restriction of $\ldeg$ to 
$\mathrm{Rep}_\Lambda(n)$. This holds if $\Lambda$ is a representation-finite algebra by 
\cite{Zwara:PAMS}, 
or when $\Lambda$ is the path algebra of an extended Dynkin quiver 
by \cite{Bongartz1}. 

An exact module theoretic characterization of when $A\ldeg B$ for 
$A,B\in \mathrm{Rep}_\Lambda(n)$ was found in \cite{Riedtmann1}, \cite{Zwara1}. 
The problem in a more geneal setup is considered in \cite{Forbregd}. 
See \cite{Smalo} and \cite{Nornes} for a survey on the topic. 
 
\subsection{Content of the present paper.} 
The \emph{nullcone} $\mathcal{N}_{n,m}$ of the $\GL_n(\field)$-variety 
$(\field^{n\times n})^m$ consists of the matrix tuples $A$ such that the 
Zariski closure of $\mathcal{O}(A)$ contains the 
zero tuple $\mathbf{0}$ (i.e. $A\ldeg \mathbf{0}$). 
It is also the common zero locus of all homogeneous $\GL_n(\field)$-invariant polynomial functions 
on $(\field^{n\times n})^m$, so it is an affine $\GL_n(\field)$-variety. 
Note that the general notion of the nullcone for a representation of a reductive algebraic group plays 
a distinguished role in geometric invariant theory \cite{MFK94}. 

A central result in our paper is the following partial converse to 
Lemma~\ref{lemma:deg implies hom}: 

\begin{theorem}\label{thm:main} 
For $A,B\in \mathcal{N}_{3,m}$, we have that 
\[A\ldeg B \iff A\lhom B.\]
\end{theorem}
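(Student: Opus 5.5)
The direction $\Rightarrow$ is Lemma~\ref{lemma:deg implies hom}, so only $A\lhom B\Rightarrow A\ldeg B$ needs proof.

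\emph{Step 1: triangular normal form.} A tuple in $\mathcal{N}_{3,m}$ generates a nilpotent associative algebra, so it is simultaneously conjugate into the strictly upper triangular $3\times 3$ matrices $\slt$. Hence each orbit in $\mathcal{N}_{3,m}$ meets $\slt^m$. Write $A_i=a_iE_{12}+b_iE_{23}+c_iE_{13}$ and set $a,b,c\in\field^m$. The stabilizer of the flag is the Borel subgroup $\lt$. It acts on the triple $(a,b,c)$ by scaling $a$ and $b$ independently and by sending $c\mapsto \lambda c+\mu a+\nu b$.

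\emph{Step 2: classification.} Since $\mathrm{GL}_m$ acts on the index of the tuple compatibly with both preorders, we may also change basis in $\field^m$. This reduces the orbits to a finite list of discrete types plus a few one-parameter families. The discrete data are
\begin{itemize}
\item the product rank $\rank(A_iA_j)$, which records whether $a\otimes b\neq 0$;
\item $\dim\mathrm{span}(A_1,\dots,A_m)$;
\item whether $a$ and $b$ are proportional, and the dimension of $\langle a,b,c\rangle$.
\end{itemize}
The continuous parameter appears only when $a$ and $b$ are proportional: one can then have $A_1=E_{12}+tE_{23}$ with $t\neq 0$, and the ratio $t$ can be normalized.

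For each type we record the rank invariants $\rank\varphi(A)$ for a small finite set of test matrices $\varphi$. Natural choices are
\begin{itemize}
\item the block row $(x_1,\dots,x_m)$ and its transpose column;
\item the block matrix $(x_ix_j)_{i,j}$;
\item the $2\times 2$ block matrices built from $x_i$ and $1$ that detect the kernel and image flags.
\end{itemize}
These are the dimensions of $\hom$-spaces that appear in Section~\ref{sec:module theory}.

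\emph{Step 3: the degeneration list.} We compute the partial order on types that these rank invariants induce. For each covering pair $A\lshom B$ in this order we then exhibit an explicit degeneration. The standard form is a one-parameter subgroup or a curve $g(t)\in\lt$ with $g(t)\cdot A\to B$ as $t\to 0$, for instance $\diag(1,t,t^2)$ composed with a base change of $\field^m$. Where $A$ and $B$ have different spans, a degeneration $c\mapsto c+\mu a$ followed by rescaling works, possibly after first moving $A$ within its orbit. Transitivity of $\ldeg$ then gives every pair.

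\emph{Main obstacle.} The hardest part is the families with a continuous parameter, where $a$ and $b$ are proportional. There we must show two things:
\begin{itemize}
\item the rank functions separate distinct parameter values well enough that $A\lhom B$ forces the expected relation;
\item each such orbit degenerates to all the discrete orbits lying below it in the hom-order.
\end{itemize}
The second point is where Carlson-type counterexamples arise for larger $n$. For $n=3$ the nilpotency index is at most $3$, and I expect $\overline{\mathcal{O}(A)}$ to be cut out inside $\mathcal{N}_{3,m}$ by the conditions $\rank\le$ given by the test matrices above. I would verify this by a dimension count of each orbit against the dimension of the corresponding rank locus.
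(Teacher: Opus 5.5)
Your Step~2 has a genuine gap, and the rest of the plan depends on it. The compatibility of $\ldeg$ and $\lhom$ with the $\GL_m(\field)$-action on the index only lets you apply one and the same $h\in\GL_m(\field)$ to both $A$ and $B$. It does not let you replace $\GL_3(\field)$-orbits by your ``types'', which are $\GL_3(\field)\times\GL_m(\field)$-orbits, and the order you must determine is strictly finer than anything visible on types.
\begin{itemize}
\item For $m=2$, all $E_\nu=(E_{21},\nu E_{21})$ lie in a single $\GL_3(\field)\times\GL_2(\field)$-orbit. Yet $B_{\rho,\lambda}$ degenerates to $E_\nu$ only for $\nu=\rho$.
\item The $7$-dimensional orbits of the $A_{\lambda,\mu}$ form a $2$-parameter family of pairwise $\lhom$-incomparable orbits inside one type, with $a$ and $b$ not proportional. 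So $\GL_3(\field)$-orbit moduli do not occur only in the proportional case.
\end{itemize}

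This also rules out your expectation that $\overline{\mathcal{O}(A)}$ is cut out by $\rank\le$ conditions for a fixed finite set of parameter-free test matrices. For each such $\varphi$, the function $\nu\mapsto\rank\varphi(E_\nu)$ is constant off a finite set. So for generic $\rho\neq\nu$, every test rank of $E_\nu$ equals that of $E_\rho$. Since $B_{\rho,\lambda}\ldeg E_\rho$, that rank is at most the corresponding rank of $B_{\rho,\lambda}$. Thus $E_\nu$ lies in your rank locus but not in $\overline{\mathcal{O}(B_{\rho,\lambda})}$. The separating $\varphi$ must depend on the moduli, e.g.\ $x_2-\rho x_1-\lambda x_1^2$. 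A dimension count cannot detect such extra orbits, since they have smaller dimension than $\mathcal{O}(B_{\rho,\lambda})$.

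What is missing is a way to control the $\GL_3(\field)$-orbits themselves, whose moduli grow with $m$. The paper first reduces to $m=2$:
\begin{itemize}
\item Every $A\in\slt_3(\field)^m$ has two components $A_i,A_j$ generating the same algebra as all of them, so $A_k=\varphi_k(A_i,A_j)$.
\item Then $0=\rank(A_k-\varphi_k(A_i,A_j))\ge\rank(B_k-\varphi_k(B_i,B_j))$ forces the same relations on $B$.
\item Projecting onto components $i,j$ is then a $\GL_3(\field)$-equivariant isomorphism of closed subvarieties, compatible with both preorders.
\end{itemize}
Your $\GL_m$ idea is usable only as a partial reduction of this kind. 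Applying to both tuples an $h$ with $(hA)_k=0$ for $k>3$ forces $(hB)_k=0$, but this alone does not reach $m=2$.

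For $m=2$ the paper classifies the $\GL_3(\field)$-orbits with their moduli and gives explicit degeneration curves. It then invokes Bongartz's result that $A\lshom B$ implies $\dim\mathcal{O}(A)>\dim\mathcal{O}(B)$. Hence only non-degenerating pairs with strictly decreasing orbit dimension need a witness $\varphi$ with $\rank\varphi(A)<\rank\varphi(B)$. Your plan has no substitute for this dimension argument. So even with a correct orbit list, you would still have to exclude $A\lhom B$ between distinct orbits of equal dimension separately.
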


\begin{remark}\label{remark:n>3} 
The analogues of Theorem~\ref{thm:main} for $\mathcal{N}_{n,m}$ with 
$n\ge 4$ does not hold. 
Carlson's example \cite{Riedtmann1} mentioned above provides explicit 
pairs $A,B\in \mathcal{N}_{4,2}$ with $A\lhom B$, such that 
$B\notin \overline{\mathcal{O}(A)}$ (see also \cite{DKMV}). 
So Theorem~\ref{thm:main} shows that this example is in some sense minimal 
possible. 
\end{remark}

The paper is organized as follows. 
 In Section~\ref{sec:module theory} we begin with Lemma~\ref{lemma:hom-rank} stating a module theoretic description 
 of the preorder $\lhom$ due to Riedtmann \cite{Riedtmann1}; we present a new proof fitting to our notation. 
 Although $A\lhom B$ for $A,B\in (\field^{n\times n})^m$ does not necessarily imply $B\in \overline{\mathcal{O}(A)}$ in general, we show the weaker statement that $A\lshom B$ implies $\dim \mathcal{O}(B)<\dim \mathcal{O}(A)$; 
 this is the content of Lemma~\ref{lemma:hom order implies dim ineq}, 
proved by combining Lemma~\ref{lemma:hom-rank} with a result of Bongartz \cite{Bongartz2}. 
Section~\ref{sec:two composition factors} deals with the case of $2\times 2$ matrices, and we prove in 
Corollary~\ref{cor:JHle2} that $\ldeg$ and $\lhom$ coincide on $(\field^{2\times 2})^m$. 
This is deduced from a more general result 
dealing with modules having Jordan-Hölder composition length at most $2$. 
Section~\ref{sec:N32} is devoted to a direct study of $\mathcal{N}_{3,2}$. 
In Proposition~\ref{prop:list of 3x3 nilpotent orbits} we classify 
the similarity classes of $3\times 3$ nilpotent matrix pairs,  
in Proposition~\ref{prop:degenerations} we determine the degeneration order on them 
(see Figure~\ref{figure:hasse}),  
and in Proposition~\ref{prop:hom implies deg in N_{3,2}} we point out that 
the restrictions of $\lhom$ and $\ldeg$ to $\mathcal{N}_{3,2}$ coincide. 
In Section~\ref{sec:N3m} we deduce from Proposition~\ref{prop:hom implies deg in N_{3,2}} 
that $\ldeg$ and $\lhom$ coincide on $\mathcal{N}_{3,m}$ for all $m$, i.e. 
Theorem~\ref{thm:main} holds. 
In Section~\ref{sec:hesselink} we review the Hesselink stratification of $\mathcal{N}_{3,2}$ worked out in 
\cite{lebruyn}, and describe how the $\GL_3(\field)$-orbits listed in Proposition~\ref{prop:list of 3x3 nilpotent orbits} are distributed among the Hesselink strata. In the final Section~\ref{sec:GL3xGL2} 
we take into account the natural action of $\GL_2(\field)$ 
on $(\field^{n\times n})^2$ commuting with the $\GL_n(\field)$-action.  
We show that there are finitely many $\GL_3(\field)\times \GL_2(\field)$-orbits 
in $\mathcal{N}_{3,2}$ and determine the degeneration order for the orbits 
in Proposition~\ref{prop:GL3xGL2}. In fact there is a $2$-dimensional subgroup 
$H$ in $\GL_2(\field)$ such that the number of 
$\GL_3(\field)\times H$-orbits is still finite, their degeneration order is determined 
in Proposition~\ref{prop:GL3xH-orbits}.

\section{Some module theory}\label{sec:module theory} 

For $A\in (\field^{n\times n})^m$ denote by $V_A$ the $R$-module 
$V_A=\field^n$ with module structure $f\cdot v:=f(A)v$ for 
$f\in R$ and $v\in \field^n$. 
Clearly, the $R$-modules $V_A$ and $V_B$ are isomorphic 
if and only if $\mathcal{O}(A)=\mathcal{O}(B)$. 
The relation $\lhom$ is called the hom-preorder because of 
Lemma~\ref{lemma:hom-rank} below. It is due to 
Riedtmann \cite[Section 2.1, Lemma]{Riedtmann1},   
where it is stated and proved using the language of representations of quivers with admissible 
relations. 
Here we give a different proof, suited better to our notation.  

\begin{lemma} \label{lemma:hom-rank}
For $A,B\in (\field^{n\times n})^m$ the following conditions are equivalent: 
\begin{itemize}
    \item[(i)] $A\lhom B$;  
\item[(ii)] $\dim_\field\hom_R(X,V_A) 
\le \dim_\field\hom_R(X,V_B)$  
holds for all 
finite dimensional $R$-modules $X$. 
\item[(iii)] $\dim_\field\hom_R(V_A,X) 
\le \dim_\field\hom_R(V_B,X)$  
holds for all 
finite dimensional $R$-modules $X$.
\end{itemize}
\end{lemma}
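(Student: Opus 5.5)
The plan is to translate ranks of matrices $\varphi(A)$ into dimensions of Hom-spaces, using finitely presented $R$-modules. Fix $\varphi\in R^{k\times l}$. Right multiplication by $\varphi$ gives an $R$-module map $R^{1\times k}\to R^{1\times l}$; let $X_\varphi$ be its cokernel, a finitely presented left module $R^{l}\big/ R^{k}\varphi$. Here $R^k$ is viewed as row vectors, so that $\hom_R(R^{1\times l},V_A)\cong \field^{ln}$. Applying the left exact functor $\hom_R(-,V_A)$ to the presentation $R^{1\times k}\xrightarrow{\cdot\varphi} R^{1\times l}\to X_\varphi\to 0$ gives the exact sequence
\[0\to \hom_R(X_\varphi,V_A)\to \field^{ln}\xrightarrow{\ \varphi(A)\ }\field^{kn}.\]
I will check carefully that the induced map on Hom-spaces is exactly multiplication by the block matrix $\varphi(A)$, possibly after transposing conventions. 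Hence $\dim\hom_R(X_\varphi,V_A)=ln-\rank\varphi(A)$. Since $ln$ does not depend on $A$, the condition $\rank\varphi(A)\ge\rank\varphi(B)$ for all $\varphi$ is equivalent to $\dim\hom_R(X,V_A)\le\dim\hom_R(X,V_B)$ for all finitely presented $X$.

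Next I pass to finite-dimensional $X$. If $X$ is finite dimensional, then it is finitely presented over $R$: it is finitely generated, and the kernel of $R^l\to X$ is a submodule of finite codimension in a free module over a free algebra. By Schreier-type results for firs, or simply by the fact that finite codimension left ideals of a finitely generated algebra are finitely generated, this kernel is finitely generated. So every finite-dimensional $X$ is some $X_\varphi$, and (i) implies (ii).

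For (ii) implies (i), a general $X_\varphi$ may be infinite dimensional. However, $\hom_R(X_\varphi,V_A)=\hom_R(X_\varphi/I X_\varphi,V_A)$ for any two-sided ideal $I$ annihilating both $V_A$ and $V_B$. For $I$ I take a power of the augmentation ideal, or more safely the intersection of the annihilators of $V_A$ and $V_B$, which has finite codimension. Then $X_\varphi/IX_\varphi$ is a finite-dimensional module, and (ii) applied to it yields (i).

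For (iii) I will use duality. The $\field$-dual $D=\hom_\field(-,\field)$ exchanges finite-dimensional left $R$-modules with right $R$-modules, equivalently with left modules over $R^{op}\cong R$, the isomorphism being the reversal of words. Under this duality $V_A$ corresponds to $V_{A^T}$, where $A^T=(A_1^T,\dots,A_m^T)$, and $\hom_R(V_A,X)\cong\hom_R(DX,V_{A^T})$. On the rank side, $\rank\varphi(A)=\rank\varphi(A)^T=\rank\varphi^\ast(A^T)$, where $\varphi^\ast$ is the transpose of $\varphi$ with each entry word reversed. So $A\lhom B$ if and only if $A^T\lhom B^T$. Combining this with the established equivalence of (i) and (ii) for transposed tuples gives (iii).

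The main obstacle is getting the bookkeeping right: left versus right modules, rows versus columns, and word reversal, so that the Hom functor produces exactly $\varphi(A)$ rather than its transpose. The reduction to finite-dimensional test modules via the finite-codimension ideal $I$ also needs care.
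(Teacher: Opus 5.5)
Your proposal is correct. For (i)$\Leftrightarrow$(ii) it is the paper's argument. It uses the same presentation $R^k\xrightarrow{\cdot\varphi}R^l\to X_\varphi\to 0$ and the same identity $\dim_\field\hom_R(X_\varphi,V_A)=ln-\rank\varphi(A)$. With your row-vector convention the induced map is exactly left multiplication by $\varphi(A)$, so no transposition is needed. It then passes to $X_\varphi/JX_\varphi$ with $J$ the intersection of the two annihilators. The augmentation-ideal option would only work for nilpotent tuples, so your ``safer'' choice is the one actually needed.

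The paper simply cites Lewin for finite presentability of finite-dimensional modules. Your elementary route works, provided you run it for finite-codimension submodules of $R^l$ and not only for left ideals. If $e_1,\dots,e_d$ is a basis of $X$ and $x_je_i=\sum_k c_{jik}e_k$, the elements $x_j\epsilon_i-\sum_k c_{jik}\epsilon_k$ generate a submodule $L'\subseteq R^d$ with $R^d=\sum_i\field\epsilon_i+L'$. This forces $L'$ to be the whole kernel.

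The genuine difference is in (iii). The paper proves (i)$\Leftrightarrow$(iii) directly. It presents $X^*$ as a right module via column vectors, applies $-\otimes_RV_A$, and uses $X^*\otimes_RV_A\cong\hom_R(V_A,X)^*$ to get $\dim_\field\hom_R(V_A,X)=nk-\rank\varphi(A)$. It then repeats the finite-codimension reduction for infinite-dimensional cokernels.

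You instead use $\varphi(A)^T=\varphi^*(A^T)$ (block transpose plus word reversal) to get $A\le_{\mathrm{hom}}B\iff A^T\le_{\mathrm{hom}}B^T$. You then use the $\field$-duality, with $DV_A\cong V_{A^T}$ and $\hom_R(V_A,X)\cong\hom_R(DX,V_{A^T})$, to identify (iii) for $(A,B)$ with (ii) for $(A^T,B^T)$. This works because $D$ is a bijection on isomorphism classes of finite-dimensional modules.

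Your route obtains (iii) formally from the already established equivalence, with no tensor products and no second reduction step. The paper's route has the advantage of an explicit rank formula for $\dim_\field\hom_R(V_A,X)$ in terms of a presentation of $X^*$.
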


\begin{proof} (i)$\Rightarrow$(ii): 
A matrix $\varphi\in R^{k\times l}$ can be interpreted as the $R$-module 
homomorphism $R^k\to R^l$, $[r_1,\dots,r_k]\mapsto [r_1,\dots,r_k]\cdot \varphi$ 
(matrix multiplication on the right hand side). 
Denote by $X$ the cokernel of $\varphi:R^k\to R^l$. 
Applying the functor $\hom_R(-,V_A)$ to the exact sequence 
\begin{equation}\label{eq:exact1} 
R^k\stackrel{\varphi}\to R^l\to X\to 0\end{equation}
we get the exact sequence 
\begin{equation} \label{eq:exact2} 
0\to \hom_R(X,V_A)\to \hom_R(R^l,V_A)\to \hom_R(R^k,V_A).
\end{equation}
Identify $\hom_R(R^k,V_A)$ with $V_A^k$, in the standard way; that is, 
$(v_1,\dots,v_k)\in V_A^k$ corresponds to the $R$-module homomorphism 
$(r_1,\dots,r_k)\mapsto \sum_{i=1}^k r_iv_i$. Similarly, make the identification 
$\hom_R(R^l,V_A)\cong V_A^l$. Then 
$\hom_R(\varphi,V_A)$ gets identified with the $\field$-linear map 
$\field^{nl}=V_A^l\to V_A^k=\field^{nk}$ given by 
multiplication from the left by the matrix $\varphi(A)$. 
It follows by \eqref{eq:exact2} that 
\begin{equation}\label{eq:dim image}
\rank(\varphi(A))=\dim_\field\mathrm{Im}(\hom_R(\varphi,V_A))=
nl-\dim_\field\hom_R(X,V_A).
\end{equation} 
Since any finite dimensional $R$-module $X$ has a finite free resolution as in \eqref{eq:exact1} 
(see \cite[P. 458, Corollary]{lewin}), 
\eqref{eq:dim image} shows that (i) implies (ii).  

(ii)$\Rightarrow$(i): 
Assume that (ii) holds for $A,B\in (\field^{n\times n})^m$.   
Take any matrix $\varphi\in R^{k\times l}$. Consider the exact sequence \eqref{eq:exact1}. 
In particular, $X$ denotes the cokernel  of $\varphi$. 
Formula \eqref{eq:dim image} shows that 
\begin{equation}\label{eq:dim image 2}
\dim_\field\hom_R(X,V_A)-\dim_\field\hom_R(X,V_B)=\rank(\varphi(B))-\rank(\varphi(A)).\end{equation} 
If $X$ is finite dimensional, than we may conclude from \eqref{eq:dim image 2} by assumption (i) that 
$0\ge \rank(\varphi(B))-\rank(\varphi(A))$. 

It remains to deal with the case when $X$ is not finite dimensional. 
The annihilator ideals $\mathrm{ann}_R(V_A)$ and $\mathrm{ann}_R(V_B)$ 
both have finite codimension (at most $n^2$) in $R$. Write $J$ for the intersection of the two annihilators, 
then $J$ also has finite codimension in $R$. Set $\bar X:=X/JX$. Then $\bar X$ is a finitely generated module 
over the finite dimensional algebra $R/J$, hence $\bar X$ is finite dimensional. Moreover, any $R$-module 
homomorphism $X\to V_A$ factors through $\bar X$, and similarly, any homomorphism $X\to V_B$ factors through $\bar X$. 
It follows that $\dim_\field\hom_R(X,V_A)=\dim_\field\hom_R(\bar X,V_A)$ 
and $\dim_\field\hom_R(X,V_B)=\dim_\field\hom_R(\bar X,V_B)$. 
Thus \eqref{eq:dim image 2} yields 
\[\dim_\field\hom_R(\bar X,V_A)-\dim_\field\hom_R(\bar X,V_B)=\rank(\varphi(B))-\rank(\varphi(A)),\] 
implying by (ii) the desired inequality $\rank(\varphi(A))\ge \rank(\varphi(B))$. 

(i)$\Rightarrow$(iii):  
For a $\field$-vector space $X$ write $X^*$ for the dual space 
$\hom_\field(X,\field)$; when $X$ is a left $R$-module, $X^*$ is naturally a right $R$-module. 
Now let $X$ be an arbitrary finite dimensional left $R$-module, 
and take a free resolution 
\begin{equation}\label{eq:exact3} 
R^l\stackrel{\varphi}\to R^k\to X^*\to 0.\end{equation}
This time the free right $R$-modules $R^l$, $R^k$ consist of column vectors with entries from $R$, 
and the map $\varphi$ is multiplication from the left by 
$\varphi\in R^{k\times l}$.  
Apply the functor $-\otimes_RV_A$ to \eqref{eq:exact3} to get the exact sequence 
\begin{equation}\label{eq:exact4}
R^l\otimes_RV_A\to R^k\otimes_RV_A\to X^*\otimes_RV_A\to 0.
\end{equation}
Note that we have the natural isomorphism 
$X^*\otimes_RV_A\cong\hom_R(V_A,X)^*$. 
Moreover, $R^l\otimes_RV_A\cong V_A^l=\field^{nl}$, 
$R^k\otimes_RV_A\cong V_A^k=\field^{nk}$ as $\field$-vector spaces, and 
the map $\varphi\otimes_R \mathrm{id}_{V_A}$ is identified with multiplication by the 
$nk\times nl$ matrix $\varphi(A)$. 
It follows by \eqref{eq:exact4} that 
\begin{align}\label{eq:dim image 3}
    \dim_\field\hom_R(V_A,X)=\dim_\field\hom_R(V_A,X)^*=
\dim_\field(X^*\otimes_RV_A)
\\ \notag =nk-\dim_\field(\mathrm{Im}(\varphi\otimes_R\mathrm{id}_{V_A}))
=nk-\rank(\varphi(A)). \end{align}
Thus $\rank(\varphi(A))\ge \rank(\varphi(B))$ implies 
$\dim_\field\hom_R(V_A,X)\le \dim_\field\hom_R(V_B,X)$. 

(iii)$\Rightarrow$(i): 
Take an arbitrary matrix $\varphi\in R^{k\times l}$. Denote also by $\varphi$ the right $R$-module homomorphism from 
$R^{l\times 1}\to R^{k\times 1}$ given by multiplication from the left by $\varphi$, and let 
$Y$ be the cokernel of $\varphi$, so we have the exact sequence 
\[R^l\to R^k\to Y\to 0.\]
If $Y$ is finite dimensional, then setting $X:=Y^*$, we conclude from 
\eqref{eq:dim image 3} that 
\[\dim_\field\hom_R(V_A,X)-\dim_\field\hom_R(V_B,X)=\rank(\varphi(B))-\rank(\varphi(A)),\] 
and we are done. 

Suppose finally that $Y$ is not finite dimensional. For given $A$ and $B$ denote by $J$ the intersection of the 
of the annihilators ideals of $V_A$ and $V_B$. Then $\bar Y:=Y/JY$ is a finitely generated module over the finite dimensional algebra 
$R/J$, hence is finite dimensional over $\field$,  $Y\otimes_R V_A\cong \bar Y\otimes_R V_A$, 
 $Y\otimes_R V_B\cong \bar Y\otimes_R V_B$. 
Moreover, \eqref{eq:dim image 3} implies that 
\begin{align*}
\rank(\varphi(B))-\rank(\varphi(A))=\dim_\field(\bar Y\otimes_R V_A)-
\dim_\field(\bar Y\otimes_R V_B)
\\ 
=\dim_\field\hom_R(V_A,X)^*-\dim_\field\hom_R(V_B,X)^*,\end{align*}
where $X=\bar Y^*$. 
\end{proof}

Although the converse of Lemma~\ref{lemma:deg implies hom} does not hold in general (i.e. 
$A\lhom B$ does not imply $A\ldeg B$), the following weaker 
statement holds, which is a consequence of Lemma~\ref{lemma:hom-rank} and a result of 
Bongartz \cite{Bongartz2}: 

\begin{lemma}\label{lemma:hom order implies dim ineq}
Let $A,B\in (\field^{n\times n})^m$. 
If $A\lshom B$, then $\dim \mathcal{O}(A)>\dim \mathcal{O}(B)$. 
\end{lemma}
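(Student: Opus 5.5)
The plan is to express orbit dimensions through endomorphism algebras, and then compare $V_A$ and $V_B$ using Lemma~\ref{lemma:hom-rank} together with Bongartz's result.

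First, recall that the stabilizer of $A$ in $\GL_n(\field)$ is the unit group of $\mathrm{End}_R(V_A)$. This unit group is open in $\mathrm{End}_R(V_A)$, so
\[\dim \mathcal{O}(A)=n^2-\dim_\field \mathrm{End}_R(V_A),\]
and similarly for $B$. It therefore suffices to show that $\dim_\field\hom_R(V_A,V_A)<\dim_\field\hom_R(V_B,V_B)$ whenever $A\lshom B$.

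By Lemma~\ref{lemma:hom-rank}, with $X=V_A$ in (ii) and $X=V_B$ in (iii), we get
\[\dim\hom_R(V_A,V_A)\le \dim\hom_R(V_A,V_B)\le \dim\hom_R(V_B,V_B).\]
So the weak inequality is immediate. The issue is strictness.

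For strictness I would invoke Bongartz's result \cite{Bongartz2}. It says that for modules $M,N$ of the same dimension over a finite dimensional algebra, if $\dim\hom(X,M)\le\dim\hom(X,N)$ for all $X$ (equivalently, the dual condition), and $\dim\hom(M,M)=\dim\hom(N,N)$, then $M\cong N$. To place ourselves in that setting, pass to the finite dimensional algebra $\Lambda=R/J$, where $J=\mathrm{ann}_R(V_A)\cap\mathrm{ann}_R(V_B)$. Both $V_A$ and $V_B$ are $\Lambda$-modules. Homomorphisms between $\Lambda$-modules are the same as $R$-homomorphisms, and every finite dimensional $\Lambda$-module is an $R$-module. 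Hence condition (ii) of Lemma~\ref{lemma:hom-rank} restricts to the hom-order over $\Lambda$.

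Now suppose $\dim\mathcal{O}(A)=\dim\mathcal{O}(B)$. Then the endomorphism dimensions agree, and Bongartz's theorem gives $V_A\cong V_B$, that is, $\mathcal{O}(A)=\mathcal{O}(B)$. This contradicts $A\lshom B$, so the inequality is strict.

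The main obstacle is quoting the correct form of Bongartz's theorem. If the available version is instead the one in which equality $\dim\hom(X,M)=\dim\hom(X,N)$ for all $X$ forces $M\cong N$ (the Auslander-type statement), I would argue as follows. Write $[X,M]$ for $\dim\hom(X,M)$, and set $\delta(X)=[X,V_B]-[X,V_A]\ge 0$. Bongartz's key lemma gives an exact sequence argument (via an Auslander–Reiten type sequence ending at some indecomposable $U$ with $\delta(U)>0$) showing that $\delta(U)>0$ for some $U$ implies $[V_B,V_B]-[V_A,V_A]>0$.

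Concretely, I would try to produce this via a short exact sequence $0\to V_A\to V_B\oplus Z\to Z\to 0$ or a similar construction, following \cite{Bongartz2}, using the characterization of $\lhom$ from Lemma~\ref{lemma:hom-rank}. That step is where I expect the real work to be, if it cannot simply be cited.
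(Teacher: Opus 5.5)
Your proof is correct and is essentially the paper's: $\dim\mathcal{O}(A)=n^2-\dim_\field\mathrm{End}_R(V_A)$, then the chain $\dim_\field\hom_R(V_A,V_A)\le\dim_\field\hom_R(V_A,V_B)\le\dim_\field\hom_R(V_B,V_B)$ from Lemma~\ref{lemma:hom-rank}, then \cite[Lemma 1.2]{Bongartz2} to rule out equality; your passage to $R/J$ makes explicit a reduction the paper leaves tacit. Your fallback is unnecessary and would not work as sketched: left exactness of $\hom_R(X,-)$ applied to $0\to V_A\to V_B\oplus Z\to Z\to 0$ yields $B\lhom A$, so no such sequence exists when $A\lshom B$; moreover, the correctly oriented Riedtmann--Zwara sequences characterize $\ldeg$, which $\lhom$ does not imply in general (Remark~\ref{remark:n>3}).
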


\begin{proof} 
The dimension $\dim\mathcal{O}(A)$ of the locally closed algebraic variety 
$\mathcal{O}(A)$ equals $\dim\GL_n(\field)-\dim\mathrm{Stab}_{\GL_n(\field)}(A)$. 
The stabilizer $\mathrm{Stab}_{\GL_n(\field)}(A)$ is the group of units in the endomorphism algebra 
$\mathrm{End}_R(V_A)$ of the $R$-module $V_A$. 
Thus we have $\dim\mathcal{O}(A)=n^2-\dim_\field\hom_R(V_A,V_A)$ and 
$\dim\mathcal{O}(B)=n^2-\dim_\field\hom_R(V_B,V_B)$. 
Consequently, we have 
\begin{equation}\label{eq:dim hom orb A}
  \dim\mathcal{O}(A)-\dim\mathcal{O}(B)=
  \dim_\field\hom_R(V_B,V_B)-\dim_\field\hom_R(V_A,V_A).
\end{equation}
By Lemma~\ref{lemma:hom-rank}, $A\lhom B$ implies 
\begin{equation}\label{eq:dim hom A}
    \dim_\field\hom_R(V_A,V_A)\le \dim_\field\hom_R(V_A,V_B)\le 
\dim_\field\hom_R(V_B,V_B).\end{equation}
Combining \eqref{eq:dim hom orb A} and \eqref{eq:dim hom A} we conclude that 
$A\lhom B$ implies $\dim\mathcal{O}(A)\ge \dim\mathcal{O}(B)$. 
Note finally that \cite[Lemma 1.2]{Bongartz2} asserts that 
$A\lhom B$ and 
$\dim_\field\hom_R(V_A,V_A)=\dim_\field\hom_R(V_B,V_B)$ 
together imply $V_A\cong V_B$ as $R$-modules (equivalently, 
$\mathcal{O}(A)=\mathcal{O}(B)$). 
This finishes the proof of the implication $A\lshom B\Rightarrow 
\dim \mathcal{O}(A)>\dim \mathcal{O}(B)$.  
\end{proof} 

\section{Modules with at most two composition factors}
\label{sec:two composition factors}

\begin{lemma}\label{lemma:S,T,M} 
    Let $M$ and $N$ be $R$-modules, $\dim_\field M=\dim_\field N$.  
    Assume that $M$ 
    or $N$ has 
    Jordan-Hölder composition length at most $2$, and 
    \begin{equation}\label{eq:hom(X,M)}
      \text{for all submodules }X \text{ of }M\colon \quad   \dim_{\field}\hom_{R}(X, M) \leq \dim_{\field}\hom_{R}(X, N).
    \end{equation}    
    Then $M\cong N$, or $M$ has Jordan-Hölder composition length $2$, and 
    denoting by $S,T$ the composition factors of $M$  ($S\cong T$ is allowed), we have 
    $N\cong S\oplus T$. 
\end{lemma}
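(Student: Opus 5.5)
The plan is to produce an injective homomorphism into $N$ from $M$ itself or from a semisimple module of the same dimension, and then conclude by $\dim_\field M=\dim_\field N$. The only inputs are special choices of $X$ in \eqref{eq:hom(X,M)}: simple submodules, the socle, and $X=M$. Two facts are used throughout. First, for a finite dimensional simple $S$ we have $\mathrm{End}_R(S)=\field$, since $\field$ is algebraically closed. Second, $\dim_\field\hom_R(S,Y)$ is the multiplicity of $S$ in the socle of $Y$. Applying \eqref{eq:hom(X,M)} to $X=\mathrm{soc}(M)$ and to each simple submodule therefore shows that every simple $S$ occurs in $\mathrm{soc}(N)$ at least as often as in $\mathrm{soc}(M)$, so $\mathrm{soc}(M)$ embeds in $N$.

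\emph{Case 1: $M$ has length at most $2$.} If $M$ is simple, then $\hom_R(M,N)\ne 0$ and a nonzero map out of a simple module is injective, so $M\cong N$. Now let $M$ have length $2$, with simple submodule $S$ and $T\cong M/S$. If some $f\in\hom_R(M,N)$ is injective we are done, so assume none is. Then every nonzero $f$ has a simple kernel.

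First suppose $M$ is semisimple. Since $\mathrm{soc}(M)=M$ embeds in $N$ and the dimensions agree, $M\cong N$, which contradicts the assumption; so this subcase is settled.

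Next suppose $M$ is not semisimple, so $\mathrm{soc}(M)=S$ is the unique simple submodule. Then every nonzero $f:M\to N$ kills $S$, giving $\hom_R(M,N)\cong\hom_R(T,N)$.

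If $T\not\cong S$, then $T$ embeds in $N$ via the image of such an $f$, and $S$ embeds in $N$ by the socle argument. These are non-isomorphic simple submodules, so their sum is direct, and comparing dimensions gives $N\cong S\oplus T$.

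If $T\cong S$, then $\mathrm{End}_R(M)$ contains $\mathrm{id}$ and the nonzero nilpotent map $M\to M/S\cong S\hookrightarrow M$. Hence
\[2\le\dim\hom_R(M,M)\le\dim\hom_R(M,N)=\dim\hom_R(S,N),\]
so $S\oplus S$ embeds in $N$ and $N\cong S\oplus S=S\oplus T$.

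\emph{Case 2: $N$ has length at most $2$ and $M$ is arbitrary.} If $N$ is simple, a simple submodule of $M$ embeds in $N$ and is therefore $\cong N$; equality of dimensions forces $M$ to be that simple module.

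Let $N$ have length $2$. By the socle multiplicity inequality, $\mathrm{soc}(M)$ has length at most $2$. If $\mathrm{soc}(M)$ has length $2$, it embeds in $\mathrm{soc}(N)\subseteq N$ and has length at least that of $\mathrm{soc}(N)$. So $N=\mathrm{soc}(N)\cong\mathrm{soc}(M)\subseteq M$, and the dimensions give $M\cong N$ semisimple.

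The remaining subcase, $\mathrm{soc}(M)=S$ simple, is where I expect the main obstacle, since $M$ could a priori have length $\ge 3$. The plan is to use $X=M$ to get a nonzero $f:M\to N$; if $f$ is injective we are done. Otherwise $S\subseteq\ker f$, so $M/\ker f$ embeds in $N$. I would then apply \eqref{eq:hom(X,M)} to suitable submodules $X$, namely $\ker f$ and preimages of simple submodules. The aim is to compare $\dim\hom_R(X,M)$, which is at least $1$ because of the inclusion $X\hookrightarrow M$, with $\dim\hom_R(X,N)$, where $N$ has length only $2$. Combined with $\dim M=\dim N$, this should force either a contradiction with length $\ge 3$ or reduction to Case 1. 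Once $M$ has length $2$, Case 1 yields the conclusion.
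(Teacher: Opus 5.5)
\textbf{Gap: the last subcase of Case 2 is not proved.} Your Case 1, and the subcases of Case 2 where $N$ is simple or $\mathrm{soc}(M)$ has length $2$, are correct and match the paper's argument. The remaining subcase ($N$ of length $2$, $\mathrm{soc}(M)=S$ simple) is the only place where $M$ could be long, and you leave it as a plan. The route you sketch is a nonzero $f\colon M\to N$, then submodules such as $\ker f$, aiming at a contradiction with length $\ge 3$. It is not carried out, and it misses two ingredients. The first is the right test module. The second concerns the bound $\dim_\field\hom_R(X,M)\ge 1$ coming from the inclusion, which is too weak when the second composition factor is again $S$. In that situation a map $X\to N$ killing $S$ only recovers the copy of $S$ in $N$ that you already had from the socle argument.

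\textbf{The missing step.} The paper applies the hypothesis to $X=M'$, the preimage in $M$ of a simple submodule $T$ of $M/S$. Here $M\neq S$, since otherwise Case 1 applies. Then $M'$ has length $2$ with socle $S$, and you rerun your Case 1 argument for maps $M'\to N$. Use that $N$ has length $2$ in place of your dimension count, since a priori $\dim_\field M'<\dim_\field N$.

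If some $g\colon M'\to N$ is injective, then $M'\cong N$. Otherwise every nonzero $g$ kills $S$, so $\hom_R(M',N)\cong\hom_R(T,N)$. This space is nonzero because $\dim_\field\hom_R(M',M)\ge 1$. When $T\cong S$ its dimension is at least $2$, because $\hom_R(M',M)$ contains both the inclusion and the map $M'\to M'/S\cong S\hookrightarrow M$. Either way $S\oplus T$ embeds in $N$, hence $N\cong S\oplus T$ by length.

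In both cases $\dim_\field N=\dim_\field M'$, so $\dim_\field M=\dim_\field N$ forces $M=M'$. It is this dimension count, not a contradiction extracted from $\ker f$, that excludes length $\ge 3$. It yields either $M\cong N$, or $M$ of length $2$ with $N\cong S\oplus T$.
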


\begin{proof}
By Schur's Lemma we have that for a finite dimensional simple $R$-module $X$ and 
for a finite dimensional $R$-module $V$, $\dim_{\field}\hom_{R}(X, V)$ equals 
the multiplicity of $X$ as a direct summand in the socle of $V$. 
Hence applying \eqref{eq:hom(X,M)} for the simple submodules $X$ of $M$, 
we get that $N$ contains a submodule isomorphic to $\mathrm{Soc}(M)$. 
In particular, if $M=\mathrm{Soc}(M)$ is semisimple, then $\mathrm{Soc}(N)$ 
contains a direct summand isomorphic to $M$, implying in turn 
by $\dim_{\field}M=\dim_{\field}N$  that $M\cong N$. 
From now on we assume that $M$ is not semisimple, 
and $M\ncong N$. 

Suppose that $M$ has Jordan-Hölder composition length $2$. 
Then $\mathrm{Soc}(M)\cong S$ and 
$M/\mathrm{Soc}(M)\cong T$ are simple $R$-modules. 
As we explained above, $N$ has a submodule isomorphic to $S$. 
As $\dim_{\field}\hom_R(M,M)>0$, there exists a non-zero 
$\varphi\in \hom_R(M,N)$ by \eqref{eq:hom(X,M)} applied with $X=M$. 
As $\varphi$ is not an isomorphism,  
it has a non-zero kernel, so we have $\ker(\varphi)\supseteq \mathrm{Soc}(M)$, 
and the image of $\varphi$ must be isomorphic to $T$. So $N$ also has a submodule 
isomorphic to $T$. 
If $S\ncong T$, then $N$ has a submodule isomorphic to $S\oplus T$, 
and taking into account 
$\dim_{\field}M=\dim_{\field}S+\dim_{\field}T=\dim_{\field}N$  
we conclude that $N\cong S\oplus T$. 
If $S\cong T$, then . 
$M/\mathrm{Soc}(M)\cong S$.  
Denoting by $\varphi$ 
the natural surjection $M\to M/\mathrm{Soc}(M)$ composed with the 
embedding $\mathrm{Soc}(M)\to M$, we have that $\varphi$ and 
$\mathrm{id}_M$ are linearly independent elements in 
$\hom_R(M,M)$. It follows that 
$2\le \dim_{\field}\hom_R(M,M)\le 
\dim_{\field}\hom_R(M,N)$. 
Since $M\ncong N$, any homomorphism from $M$ to $N$ contains 
$\mathrm{Soc}(M)$ in its kernel, therefore 
$\dim_{\field}\hom_R(S,N)=\dim_{\field}\hom_R(M/\mathrm{Soc}(M),N)=
\dim_{\field}\hom_R(M,N)\ge 2$. 
Thus the multiplicity of $S$ as a direct summand in $N$ is at least $2$, 
implying in turn that $N\cong S\oplus S$. 

Suppose finally that $N$ has Jordan-Hölder composition length at most $2$. 
Since $N$ contains a submodule isomorphic to 
$\mathrm{Soc}(M)$ but 
$N\ncong \mathrm{Soc}(M)$ 
(since otherwise $N\cong M$ by $\dim_\field N=\dim_\field M$), 
we conclude that $\mathrm{Soc}(M)=S$ is simple, and $M$ has a submodule $M'$ 
with $M'\supseteq \mathrm{Soc}(M)$ and $M'/\mathrm{Soc}(M)=T$ simple. 
Then $1\le \dim_\field\hom_R(M',M)\le \dim_\field\hom_R(M',N)$. 
So there exists a non-zero homomorphism $\varphi\in \hom_R(M',N)$. 
It can not be an isomorphism, hence it factors through $T$. 
Moreover, if $S\cong T$, then $\dim_\field\hom_R(M',M)\ge 2$. 
So in the same way as in the above paragraph, we conclude that $N\cong S\oplus T$, which implies 
in turn by $\dim_\field M=\dim_\field N$ 
that $M=M'$. 
\end{proof}

\begin{corollary}
\label{cor:JHle2}
Let  $A,B\in (\field^{n\times n})^m$, and assume that 
the $R$-module $V_A$ or $V_B$ has 
Jordan-Hölder composition length at most $2$. 
Then $A\ldeg B$ if and only if $A\lhom B$.  
\end{corollary}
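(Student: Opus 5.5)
The implication $A\ldeg B\Rightarrow A\lhom B$ is Lemma~\ref{lemma:deg implies hom}, so only the converse needs proof. Assume $A\lhom B$. By Lemma~\ref{lemma:hom-rank}(ii) we have $\dim_\field\hom_R(X,V_A)\le\dim_\field\hom_R(X,V_B)$ for every finite dimensional $R$-module $X$, and in particular for every submodule $X$ of $M:=V_A$. With $N:=V_B$ we have $\dim_\field M=\dim_\field N=n$, and $M$ or $N$ has composition length at most $2$. So Lemma~\ref{lemma:S,T,M} applies and yields one of two cases. Either $V_A\cong V_B$, in which case $\mathcal{O}(A)=\mathcal{O}(B)$ and trivially $A\ldeg B$. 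Or $V_A$ has composition length $2$ with a composition series $0\subset S\subset V_A$, $V_A/S\cong T$, and $V_B\cong S\oplus T$.

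It remains to handle the second case by the standard degeneration of an extension to the direct sum of its end terms. Choose a basis of $\field^n$ adapted to the submodule $S\subseteq V_A$, say the first $s=\dim S$ basis vectors span $S$. After replacing $A$ by a conjugate, every $A_i$ is block upper triangular:
\[
A_i=\begin{pmatrix} P_i & Q_i\\ 0 & U_i\end{pmatrix},
\]
where $(P_1,\dots,P_m)$ realizes $S$ and $(U_1,\dots,U_m)$ realizes $T$. Conjugate by $g_t=\diag(I_s,\,tI_{n-s})$ for $t\in\field^\times$. This gives the tuple with blocks $P_i$, $t^{-1}Q_i$, $U_i$. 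Using $g_t=\diag(tI_s,\,I_{n-s})$ instead gives off-diagonal block $tQ_i$. Thus the curve $t\mapsto (\text{blocks }P_i,\ tQ_i,\ U_i)$ lies in $\mathcal{O}(A)$ for $t\neq 0$. Its value at $t=0$ is the block diagonal tuple $\diag(P_i,U_i)$, which therefore lies in $\overline{\mathcal{O}(A)}$. That tuple realizes $S\oplus T\cong V_B$, so it lies in $\mathcal{O}(B)$, and hence $B\in\overline{\mathcal{O}(A)}$, i.e. $A\ldeg B$.

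The only delicate point is checking that the hypotheses of Lemma~\ref{lemma:S,T,M} are met in the asymmetric situation where only $V_B$, rather than $V_A$, has length at most $2$. The lemma is stated exactly to allow either module to be short, and its conclusion always describes $V_A$ as the nonsplit-or-split extension and $V_B$ as $S\oplus T$. So no extra argument should be needed beyond invoking Lemma~\ref{lemma:hom-rank}(ii) restricted to submodules of $V_A$.
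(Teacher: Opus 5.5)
Your proof is correct. Up to the invocation of Lemma~\ref{lemma:S,T,M} it is the same as the paper's proof; the two differ only in the final step, where $V_B\cong S\oplus T$ and you must show $B\in\overline{\mathcal{O}(A)}$.

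\begin{itemize}
\item \textbf{The paper's route.} It observes that $V_B$ is semisimple. It then appeals to GIT (the closure of $\mathcal{O}(A)$ contains a unique closed orbit) and to Artin's theorem (this closed orbit corresponds to the direct sum of the composition factors of $V_A$). From these it concludes that $\mathcal{O}(B)$ is that unique closed orbit.
\item \textbf{Your route.} You write down the explicit curve $t\mapsto \diag(tI_s,I_{n-s})\cdot A$ in adapted block form. This degenerates the extension $0\to S\to V_A\to T\to 0$ to $S\oplus T$.
\end{itemize}

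Your argument is elementary and self-contained. It uses only the existence part of what the paper cites, namely that the semisimplification lies in the orbit closure; this is in fact the standard argument behind Artin's result. It is in the same spirit as the explicit curves $g_\varepsilon$ in Table~\ref{table:denerations}. It also works for an arbitrary submodule, not only when the end terms are simple.

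The paper's route is shorter given the citations. It also records the additional fact that $\mathcal{O}(B)$ is the unique closed, hence minimal, orbit in $\overline{\mathcal{O}(A)}$, which the corollary does not need.
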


\begin{proof} By Lemma~\ref{lemma:deg implies hom} it is sufficient to show that 
    that $A \lhom B$ implies $A\ldeg B$. 
    Suppose that $A \lhom B$. 
    Then $\dim_{\field}\hom_{R}(X, V_A) \leq \dim_{\field}\hom_{R}(X, V_B)$ for 
    all finite dimensional $R$-modules $X$ by Lemma~\ref{lemma:hom-rank}. 
    Therefore by Lemma \ref{lemma:S,T,M} we get that $V_B\cong V_A$ (and so 
    $\mathcal{O}(A)=\mathcal{O}(B)$, hence  $A\ldeg B$), 
    or $V_B \cong S \oplus T$, where $S$ and $T$ are the composition factors of $V_A$. Thus $V_B$ is a semi-simple $R$-module. 
    By GIT we know that the closure of $\mathcal{O}(A)$ contains a unique closed orbit 
    (see for example \cite{MFK94}), 
    and by \cite{artin} it corresponds to the direct sum of the composition factors of $V_A$. Thus, $\mathcal{O}(B)$ is the unique closed orbit in the Zariski closure of $\mathcal{O}(A)$. 
    In particular, we have $A\ldeg B$. 
\end{proof}

As a consequence,  for $m$-tuples of $2 \times 2$ matrices, $\ldeg$ and $\lhom$ are equivalent 
(the symbol $\otimes$ below stands for the Kronecker product of matrices):

\begin{corollary}
\label{Cor3.2}
    The following conditions are equivalent for $A,B \in (\field^{2\times 2})^m$:
    \begin{itemize}
        \item[(i)] $A \leq_{\deg} B$;
        \item[(ii)] $A\lhom B$; 
        \item [(iii)] for all $R$-modules $X$ of dimension at most two we have
            \begin{equation*}
            \label{EQ3}
                \dim_{\field}\hom_{R}(X, V_A) \leq \dim_{\field}\hom_{R}(X, V_B);
            \end{equation*}

                \item[(iv)] for all $T = (T_0, ..., T_m) \in (\field^{2m\times 2})^{m+1}$ 
            \begin{equation*}
                \mathrm{rk}(I \otimes T_0 + A_1 \otimes T_1 + ... + A_m \otimes T_m) \geq \mathrm{rk}(I \otimes T_0 + B_1 \otimes T_1 + ... + B_m \otimes T_m).
            \end{equation*}
    \end{itemize}
\end{corollary}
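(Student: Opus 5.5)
We will prove the cycle (i)$\Rightarrow$(ii)$\Rightarrow$(iii)$\Rightarrow$(i), together with (ii)$\Leftrightarrow$(iv).

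\textbf{Step 1: (i)$\Rightarrow$(ii)$\Rightarrow$(iii).} The implication (i)$\Rightarrow$(ii) is Lemma~\ref{lemma:deg implies hom}. For (ii)$\Rightarrow$(iii), apply Lemma~\ref{lemma:hom-rank}, direction (i)$\Rightarrow$(ii), and specialize to modules $X$ of dimension at most two.

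\textbf{Step 2: (iii)$\Rightarrow$(i).} We go through Lemma~\ref{lemma:S,T,M} with $M=V_A$ and $N=V_B$. Since $\dim_\field V_A=2$, the module $V_A$ has composition length at most $2$. Every submodule $X$ of $V_A$ has dimension at most two, so hypothesis \eqref{eq:hom(X,M)} is exactly a special case of (iii). Lemma~\ref{lemma:S,T,M} then gives two possibilities:
\begin{itemize}
\item[(a)] $V_B\cong V_A$, in which case $\mathcal{O}(A)=\mathcal{O}(B)$;
\item[(b)] $V_B\cong S\oplus T$, where $S,T$ are the composition factors of $V_A$.
\end{itemize}
In case (b) we argue as in the proof of Corollary~\ref{cor:JHle2}: $\mathcal{O}(B)$ is the unique closed orbit in $\overline{\mathcal{O}(A)}$. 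Hence $A\ldeg B$ in both cases. Alternatively, we can cite Corollary~\ref{cor:JHle2} directly once (iii)$\Rightarrow$(ii) is known, but Lemma~\ref{lemma:S,T,M} only needs the submodules of $V_A$, so the detour is unnecessary.

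\textbf{Step 3: (ii)$\Rightarrow$(iv).} The matrix $I\otimes T_0+\sum_i A_i\otimes T_i$ should be identified with $\varphi(A)$ for a suitable $\varphi\in R^{k\times l}$ whose entries are linear forms $t_0+\sum_i t_ix_i$. Here the $t$'s are scalar entries of the $T_i$, which are $2m\times 2$ matrices, so $k=2m$ and $l=2$. The Kronecker product $A_i\otimes T_i$ has to be matched with the block convention used for $\varphi(A)$. If the blocks come out in the wrong order, we apply the standard permutation similarity $P(X\otimes Y)Q=Y\otimes X$ with permutation matrices $P,Q$, which preserves rank. With this identification, (ii) gives (iv) directly.

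\textbf{Step 4: (iv)$\Rightarrow$(iii).} This is the main obstacle. For an $R$-module $X$ of dimension $d\le 2$, we need to express $\dim\hom_R(X,V_A)$ as the dimension of the kernel of a linear map built from $A$. A linear map $F:X=\field^d\to\field^2$ is a homomorphism exactly when $A_iF-FX_i=0$ for all $i$, where $X_i$ denotes the action of $x_i$ on $X$. Vectorizing $F$, this system becomes
\[(A_i\otimes I_d-I_2\otimes X_i^{T})\,\mathrm{vec}(F)=0,\qquad i=1,\dots,m.\]
Stacking these $m$ equations gives a $2dm\times 2d$ matrix
\[\sum_i A_i\otimes E_i+I\otimes T_0,\]
where $E_i$ is $e_i\otimes I_d$ (a $dm\times d$ matrix) and $T_0$ stacks the blocks $-X_i^T$. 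Its kernel has dimension $\dim\hom_R(X,V_A)$.

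When $d=2$, this is exactly of the form in (iv) with $2m\times 2$ matrices $T_j$, after possibly reordering the tensor factors by the permutation trick of Step~3. Then (iv) gives $\rank$ at $A$ $\ge$ $\rank$ at $B$. Since the matrix has $4$ columns in both cases, this yields $\dim\hom_R(X,V_A)\le\dim\hom_R(X,V_B)$.

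When $d=1$, the resulting matrices are smaller than the format required in (iv). We handle this by replacing $X$ by $X\oplus X$, which doubles both hom dimensions and reduces to the case $d=2$. The care needed is in checking that the vectorization convention produces exactly the shape $I\otimes T_0+\sum A_i\otimes T_i$; the rest of the argument is routine.
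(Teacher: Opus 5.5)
Your proof is correct. For (i)$\Rightarrow$(ii)$\Rightarrow$(iii)$\Rightarrow$(i) you follow the paper exactly: Lemma~\ref{lemma:deg implies hom}, then Lemma~\ref{lemma:hom-rank}, then Lemma~\ref{lemma:S,T,M} with $M=V_A$, $N=V_B$ together with the closed-orbit argument of Corollary~\ref{cor:JHle2}.

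The difference is in how you handle (iv). The paper asserts (iii)$\iff$(iv) in one sentence, citing \cite[Lemma 3.3]{DKMV} and \cite{Bongartz-Friedland} for the fact that $\dim_\field\hom_R(X,V_A)$ is the nullity of a pencil of the shape in (iv). You instead split it into two steps:
\begin{itemize}
\item[(a)] (ii)$\Rightarrow$(iv). After a fixed permutation equivalence $P(\cdot)Q$ exchanging the Kronecker factors, the pencil becomes $\varphi(A)$ for $\varphi\in R^{2m\times 2}$ with entries $(T_0)_{rs}+\sum_i (T_i)_{rs}x_i$. Since $P,Q$ depend only on the sizes, they act on the whole sum at once.
\item[(b)] (iv)$\Rightarrow$(iii). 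This is explicit vectorization, with $X\oplus X$ taking care of $\dim_\field X=1$.
\end{itemize}

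This routing buys something. The vectorization identity described in the paper's sentence directly gives only (iv)$\Rightarrow$(iii): each hom-dimension for $\dim_\field X\le 2$ is the nullity of \emph{some} pencil of type (iv). It does not show that every such pencil arises this way. You obtain the converse through the cycle (iii)$\Rightarrow$(i)$\Rightarrow$(ii)$\Rightarrow$(iv), so no separate argument for (iii)$\Rightarrow$(iv) is needed, and the corollary becomes self-contained.

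A few cosmetic points:
\begin{itemize}
\item The permutation trick is an equivalence, not a similarity, since the matrices are $4m\times 4$.
\item For $d=2$ your stacked matrix is already literally of the form in (iv), with $T_i=e_i\otimes I_2$, so no reordering of tensor factors is needed.
\item $\sum_i A_i\otimes E_i+I\otimes T_0$ equals the stacked system only up to a permutation of rows: its rows are indexed by $(p,i,r)$ rather than $(i,p,r)$. This does not change the kernel.
\end{itemize}
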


\begin{proof} (i)$\iff$(ii): 
A $2$-dimensional $R$-module has Jordan-Hölder composition length at most $2$, 
hence this is a special case of Corollary~\ref{cor:JHle2}. 

(ii)$\Longrightarrow$(iii): This is a trivial consequence of Lemma~\ref{lemma:hom-rank}. 

(iii)$\Longrightarrow$(i): $\dim_\field V_A= 2$, hence $V_A$ has at most 
two composition factors, and their dimension is at most $2$. 
Our assumption (iii) guarantees that the conditions of Lemma~\ref{lemma:S,T,M} 
hold for $M=V_A$ and $N=V_B$. 
Thus $V_A\cong V_B$ or $V_B\cong S\oplus T$, where  $S,T$ are the composition factors of 
$V_A$, implying $A\ldeg B$ (as it is explained in the proof of Corollary~\ref{cor:JHle2}). 
    
(iii)$\iff$(iv): The computation of $\dim_{\field}\hom_{R}(X, V_A)$ amounts to 
the computation of the dimension of the solution space of a system of homogeneous linear equations 
whose coefficient matrix is of the form 
$I \otimes T_0 + A_1 \otimes T_1 + ... + A_m \otimes T_m$. 
This is explained in \cite[Lemma 3.3]{DKMV} or in \cite{Bongartz-Friedland}. 
\end{proof}

\begin{remark}
It is shown in \cite[Theorem 1.1]{DKMV} that 
for arbitrary $n$ and $A,B\in (\field^{n\times n})^m$, we have 
$\mathcal{O}(A)=\mathcal{O}(B)$ if and only if 
$ \mathrm{rk}(I \otimes T_0 + A_1 \otimes T_1 + ... + A_m \otimes T_m) = \mathrm{rk}(I \otimes T_0 + B_1 \otimes T_1 + ... + B_m \otimes T_m)$ holds for all 
$(T_0,\dots,T_m)\in (\field^{nm\times n})^{m+1}$. 
As explained in \cite{Bongartz-Friedland} (based on \cite{Friedland} and \cite{Bongartz:Comm.Alg}), 
this result plays a key role in their approach to the classification of similarity classes of tuples of matrices, 
which constructs a stra-tification of 
$(\field^{n\times n})^m$ into locally closed subvarieties, and orbit separating invariant morphisms from the strata into affine spaces. A result on the characterization of the equality $\mathcal{O}(A)=\mathcal{O}(B)$ for matrix pairs whose first component has simple spectrum can be found in \cite{Calderon-Lopatin}. 
\end{remark}

\section{\texorpdfstring{$3\times 3$}{3x3} nilpotent matrix pairs}\label{sec:N32}

We call a matrix tuple $A=(A_1,\dots,A_m)\in (\field^{n\times n})^m$ 
\emph{nilpotent} if $A_1,\dots,A_m$ generate a nilpotent subalgebra of $\field^{n\times n}$. 
Denote by $\mathcal{N}_{n,m}$ the set of nilpotent tuples. 
Then $\mathcal{N}_{n,m}$ is the nullcone for the $\GL_n(\field)$-module 
$(\field^{n\times n})^m$ (see e.g. \cite{lebruyn}); that is, $\mathcal{N}_{n,m}$ is the union of those 
$\GL_n(\field)$-orbits whose Zariski closure contains the zero tuple. 

\subsection{Classification.}
The conjugation action of $\GL_3(\field)$ on $\field^{3\times 3}$ induces an 
action on the set of its linear subspaces, and also on the set of its 
associative subalgebras. 

Denote by $\lt_3(\field)$ the subgroup of $\GL_3(\field)$ consisting of 
invertible $3\times 3$ lower triangular matrices, 
and denote by $\slt_3(\field)$ the space of strictly lower triangular matrices.   
Denote by $E_{ij}$ the matrix unit having entry $1$ in the $(i,j)$-position and $0$ everywhere else. 
Easy calculation yields the following: 

\begin{proposition}\label{prop:classification of subspaces}
A complete irredundant list of representatives of the 
\begin{itemize} 
\item[(i)] $\lt_3(\field)$-orbits in 
$\slt_3(\field)$ is 
\[0,\quad E_{21},\quad E_{31},\quad E_{32},\quad E_{21}+E_{32}.\]
\item[(ii)] $\GL_3(\field)$-orbits of the $2$-dimensional subspaces of 
$\slt_3(\field)$ is  
\[\mathrm{Span}\{E_{21},E_{31}\},\ 
\mathrm{Span}\{E_{31},E_{32}\},\ 
\mathrm{Span}\{E_{21},E_{32}\},\ 
\mathrm{Span}\{E_{21}+E_{32},E_{31}\}.\]
\item[(iii)] $\GL_3(\field)$-orbits of the associative subalgebras of 
$\slt_3(\field)$ is 
\[\{0\},\ \mathrm{Span}\{E_{21}\},\ \mathrm{Span}\{E_{31},E_{32}\},\ 
\mathrm{Span}\{E_{21},E_{31}\},\ \mathrm{Span}\{E_{21}+E_{32},E_{31}\},\ 
\slt_3(\field).\]
\end{itemize} 
Moreover, the elementwise stabilizers of the $6$ algebras in (iii) above are pairwise non-conjugate 
in $\GL_3(\field)$. 
\end{proposition}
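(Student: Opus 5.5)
The statement is a finite classification, and I would prove it by direct computation organised by invariants. For each part I first show that every orbit meets the proposed list. Then I separate the listed representatives by quantities that are constant on orbits: ranks, kernels and images, and for subspaces and subalgebras also the nilpotency class and the dimension of the square.

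For (i), take $N\in\slt_3(\field)$. If $N$ has rank $2$, then $N$ is regular nilpotent. In that case $\ker N\subset\ker N^2$ is a flag, and it coincides with the standard flag $\langle e_3\rangle\subset\langle e_2,e_3\rangle$ that is preserved by $\lt_3(\field)$. Conjugating by a diagonal matrix normalises the two subdiagonal entries to $1$, and conjugating by a suitable unipotent lower triangular matrix kills the $(3,1)$ entry. This gives $E_{21}+E_{32}$.

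If $N$ has rank $1$, write $N=uv^T$ with $v^Tu=0$; lower triangularity forces $u\in\langle e_2,e_3\rangle$ and $v\in\langle e_1,e_2\rangle$. The $\lt_3(\field)$-invariants of $N$ are whether $\mathrm{Im}N\subseteq\langle e_3\rangle$ and whether $\ker N\supseteq\langle e_2,e_3\rangle$.
\begin{itemize}
\item If $\mathrm{Im}N\not\subseteq\langle e_3\rangle$, the orbit contains $E_{21}$.
\item If $\ker N\not\supseteq\langle e_2,e_3\rangle$, the orbit contains $E_{32}$.
\item If both conditions $\mathrm{Im}N\subseteq\langle e_3\rangle$ and $\ker N\supseteq\langle e_2,e_3\rangle$ hold, the orbit contains $E_{31}$.
\end{itemize}
The first two cases cannot happen simultaneously: the orthogonality $v^Tu=0$ with $u\notin\langle e_3\rangle$ and $v\notin\langle e_1\rangle$ forces the $(3,2)$ and $(2,1)$ data to interact, so I would check this case by explicit conjugation. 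The five representatives are separated by rank together with these two flag conditions.

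For (ii) and (iii), the first step is to note that every subalgebra of $\slt_3(\field)$ consists of nilpotent matrices. By Engel/Levitzki, any nilpotent subalgebra or nil subspace can be conjugated into $\slt_3(\field)$. I would treat subspaces $U$ of $\slt_3(\field)$ up to $\GL_3(\field)$ and split according to whether $U$ contains a rank $2$ element.

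If $U$ contains no rank $2$ element, then $U$ is a $2$-dimensional space of rank $\le1$ matrices. Such a space is either of the form $u\otimes W$ (common image) or $W\otimes v$ (common kernel). These give $\mathrm{Span}\{E_{21},E_{31}\}$ and $\mathrm{Span}\{E_{31},E_{32}\}$, up to the transposition-type symmetry, which I would check is realised inside $\GL_3(\field)$ applied to the specific representatives.

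If $U$ contains a rank $2$ element, I normalise it to $E_{21}+E_{32}$. The second basis vector can then be reduced modulo $E_{21}+E_{32}$ using the centraliser of $E_{21}+E_{32}$ and the $\lt_3(\field)$-action. This should give either $E_{31}$ (the algebra $\field[N]$, whose square is nonzero) or something equivalent to $\mathrm{Span}\{E_{21},E_{32}\}$, for example $\mathrm{Span}\{E_{21},E_{32}\}$ itself, where $U^2\ne0$ but $U$ contains rank $1$ elements of both types.

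To see that the four subspaces in (ii) are pairwise inequivalent, I compare:
\begin{itemize}
\item the presence of a rank $2$ element;
\item whether $U$ is a subalgebra;
\item whether $U^2=0$;
\item the configuration of common kernels and images.
\end{itemize}

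For (iii), among the subspaces of each dimension I keep the subalgebras. In dimension $0$, $1$ and $3$ this gives $\{0\}$, $\mathrm{Span}\{E_{21}\}$ (a $1$-dimensional subalgebra needs $N^2=0$, so rank $1$) and $\slt_3(\field)$. In dimension $2$, $\mathrm{Span}\{E_{21},E_{32}\}$ is excluded because $E_{32}E_{21}=E_{31}$ lies outside it; the other three are subalgebras.

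Finally, for the elementwise stabilisers (the centralisers $C(\mathcal{A})$ of the algebras $\mathcal{A}$), I would compute each group explicitly. They have dimensions $9,5,5,5,3,3$, I think. The two with dimension $3$ are told apart by abelianness or by the structure of their unipotent radical. The three of dimension $5$ are separated by their module structure on $\field^3$: for the common-image and common-kernel algebras the centraliser fixes a line or a plane respectively, while for $\mathrm{Span}\{E_{21}\}$ I must check the invariant subspaces. This last separation is the main obstacle. Conjugate groups have the same invariant-subspace lattice on $\field^3$ up to the conjugation, so computing these lattices for each of the three groups should suffice, and I would do that first.
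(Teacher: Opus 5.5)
Your treatment of (i)--(iii) is the direct case computation the paper has in mind; the paper gives nothing beyond ``easy calculation''. It is essentially sound, apart from three small points.

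In (i), the first two rank-one cases cannot overlap simply because they require $u_2\neq 0$ and $v_2\neq 0$, while $u_2v_2=N_{22}=0$. No conjugation is needed there.

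In (ii), the common-image and common-kernel types are \emph{not} $\GL_3(\field)$-conjugate. The map $X\mapsto w_0X^Tw_0$, with $w_0$ the antidiagonal permutation matrix, exchanges them, but it is an anti-automorphism, so there is nothing to ``realise inside $\GL_3(\field)$''.

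Your aside that every nil subspace can be conjugated into $\slt_3(\field)$ is false. For example, $\mathrm{Span}\{E_{12}+E_{23},E_{21}-E_{32}\}$ consists of nilpotent matrices but has no common kernel vector. You never need this claim, since all objects already lie in $\slt_3(\field)$.

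The genuine gap is in the final claim about stabilisers. Your guessed dimensions $9,5,5,5,3,3$ are wrong. Computing the centralisers gives $9,5,3,3,3,2$:
$\GL_3(\field)$; the centraliser of $E_{21}$; $\{aI+bE_{21}+cE_{31}\}$; $\{aI+cE_{31}+bE_{32}\}$; $\field[N]^\times$ with $N=E_{21}+E_{32}$; and $\{aI+cE_{31}\}$.

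So the centraliser of $\mathrm{Span}\{E_{21}\}$ is the unique $5$-dimensional one and is no obstacle at all. The real work is separating the three $3$-dimensional groups. The invariants you propose for groups of equal dimension cannot do this. All three are commutative, of the form $\field^\times\times U$ with $U$ a $2$-dimensional commutative unipotent group. For $\mathrm{char}\,\field\neq 2$, all these $U$ are isomorphic to $(\field,+)^2$. So neither abelianness nor the abstract structure of the unipotent radical distinguishes them.

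You need invariants of the embedding in $\GL_3(\field)$. Here are two that work:

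\begin{enumerate}
\item The linear span of $C(\mathcal A)=(\mathcal A')^\times$ is the commutant $\mathcal A'$, which in these three cases equals $\field I\oplus\mathcal A$. Hence $\mathcal A$ is recovered as the set of nilpotent elements in the span of its stabiliser. Conjugate stabilisers would therefore force conjugate algebras, contradicting (iii).
\item More concretely, the fixed space of the unipotent radical is $\langle e_2,e_3\rangle$ for $\mathrm{Span}\{E_{21},E_{31}\}$ and $\langle e_3\rangle$ for the other two. Moreover, only $\field[N]^\times$ contains an element $u$ with $\rank(u-I)=2$.
\end{enumerate}

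Your invariant-subspace idea would also work, but it must be applied to these three groups, not to the groups you thought were $5$-dimensional.
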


An element of $\GL_3(\field)$ belongs to the stabilizer 
of $(A_1,\dots,A_m)\in (\field^{3\times 3})^m$ if and only if it 
commutes with each element of the subalgebra of $\field^{3\times 3}$ 
generated by $A_1,\dots,A_m$. 
Therefore by Proposition~\ref{prop:classification of subspaces} (iii), 
there are at most $6$ isotropy types in $\mathcal{N}_{3,m}$ 
(in fact exactly $6$ when $m\ge 2$, since each subalgebra of 
$\slt_3(\field)$ is generated by $2$ elements). 
Let us list the elementwise stabilizers of the $6$ algebras from 
Proposition~\ref{prop:classification of subspaces} (iii): 
\begin{itemize}
    \item $\GL_3(\field) \text{ for }\{0\}$;
    \item $\left\{ \begin{bmatrix}      a & 0 & 0 \\ 
                        b & a & e \\
                        c & 0 & d
        \end{bmatrix} : a, b, c, d, e \in \field \text{ and } a, d \neq 0 \right\}
\text{ for }\mathrm{Span}\{E_{21}\}$;
    \item $\left\{ \begin{bmatrix} a & 0 & 0 \\ 
                        0 & a & 0 \\
                        c & b & a
        \end{bmatrix} : a,b,c\in \field, \ a \neq 0 \right\}\text{ for }\mathrm{Span}\{E_{31},E_{32}\}$;
        \item $\left\{ \begin{bmatrix} a & 0 & 0 \\ 
                        b & a & 0 \\
                        c & 0 & a
        \end{bmatrix} : a,b,c\in \field, \ a \neq 0 \right\}\text{ for }\mathrm{Span}\{E_{21},E_{31}\}$;
        \item $\left\{ \begin{bmatrix} a & 0 & 0 \\ 
                        b & a & 0 \\
                        c & b & a
        \end{bmatrix} : a,b,c\in \field, \ a \neq 0 \right\}\text{ for }\mathrm{Span}\{E_{21}+E_{32},E_{31}\}$;
    \item $\left\{ \begin{bmatrix} a & 0 & 0 \\ 
                        0 & a & 0 \\
                        c & 0 & a
        \end{bmatrix} : a,c\in \field, \ a \neq 0 \right\}\text{ for }\slt_3(\field)$.
\end{itemize}

\begin{proposition}\label{prop:list of 3x3 nilpotent orbits}
The following is a complete irredundant list of representatives of the $\GL_3(\field)$-orbits in 
$\mathcal{N}_{3,2}$: 
\begin{align*}
&A_{\lambda,\mu}:=(E_{21}+E_{32},\lambda(E_{21}+E_{32})+\mu E_{32}), \quad &\lambda,\mu\in \field,\quad \mu\neq 0;\\ 
&A_{\lambda,\infty}:=(E_{21},\lambda E_{21}+E_{32}),\quad &\lambda \in \field ; \\ 
&A_{\infty,\lambda}:=(E_{32},E_{21}+\lambda E_{32}),\quad &\lambda\in \field ; \\ 
&B_{\lambda,\mu}:=(E_{21}+E_{32}, \lambda(E_{21}+E_{32})+\mu E_{31}), \quad &\lambda,\mu\in \field ; \\ 
&B_{\infty,\lambda}:=(\lambda E_{31}, E_{21}+E_{32}),\quad &\lambda\in \field; \\ 
&C:=(E_{21},E_{31}); & \\
&D:=(E_{31},E_{32}); & \\ 
&E_{\lambda}:=(E_{21},\lambda E_{21}), \quad &\lambda\in \field; \\ 
&E_{\infty}:=(0,E_{21}); & \\ 
&O:=(0,0). & 
\end{align*}
\end{proposition}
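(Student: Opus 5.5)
We classify a nilpotent pair $A=(A_1,A_2)\in\mathcal{N}_{3,2}$ by the subalgebra $\mathcal{A}$ it generates, as follows.

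\emph{Step 1 (normal form for the span).} Since $A$ is nilpotent, the algebra it generates is nilpotent, so by Engel-type triangularization we may conjugate $A$ into $\slt_3(\field)^2$. The subspace $U=\mathrm{Span}\{A_1,A_2\}$ has dimension $0$, $1$ or $2$. We put $U$ in normal form and then classify the pairs spanning that $U$ up to the stabilizer of $U$ in $\GL_3(\field)$, acting by simultaneous conjugation.

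\emph{Step 2 (case analysis on $\dim U$).}
\begin{itemize}
\item If $\dim U=0$, the pair is $O$.
\item If $\dim U=1$, then $U$ is spanned by a nonzero nilpotent matrix $N$. Up to conjugacy, $N$ is $E_{21}$ (rank one) or $E_{21}+E_{32}$ (rank two); this is Proposition~\ref{prop:classification of subspaces}(i) up to $\GL_3$. The pair is then $(N,\lambda N)$ or $(0,N)$. For $N=E_{21}$ this gives $E_\lambda$ and $E_\infty$. For $N=E_{21}+E_{32}$ it gives $B_{\lambda,0}$ and $B_{\infty,0}$.
\item If $\dim U=2$, Proposition~\ref{prop:classification of subspaces}(ii) gives four possible $U$:
 \begin{itemize}
 \item For $\mathrm{Span}\{E_{21},E_{31}\}$ and $\mathrm{Span}\{E_{31},E_{32}\}$, the stabilizer acts on $U$ through all of $\GL(U)$. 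Indeed, block matrices $\mathrm{diag}(1,g)$, respectively $\mathrm{diag}(g,1)$, induce arbitrary $g\in\GL_2$ on $U$. Every basis of $U$ is therefore equivalent, giving the single orbits $C$ and $D$.
 \item For $U=\mathrm{Span}\{E_{21}+E_{32},E_{31}\}$, the stabilizer induces only a Borel-type subgroup of $\GL(U)$. It fixes the line $\field E_{31}=U^2$, and it rescales $E_{21}+E_{32}$ modulo $E_{31}$ and $E_{31}$ by $t$ and $t^2$ respectively, via $\mathrm{diag}(1,t,t^2)$ and unipotents. Bases are then classified by which basis vector lies outside $\field E_{31}$ and by the ratio. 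This produces $B_{\lambda,\mu}$ with $\mu\ne0$ and $B_{\infty,\lambda}$ with $\lambda\ne0$, after normalizing $\mu$ or $\lambda$ using $t$.
 \item For $U=\mathrm{Span}\{E_{21},E_{32}\}$, the rank-one elements of $U$ are exactly the two lines $\field E_{21}$ and $\field E_{32}$, and these lines are preserved or swapped. A basis is determined by where its vectors lie relative to these lines. This yields $A_{\lambda,\mu}$ (both components of rank two), $A_{\lambda,\infty}$, and $A_{\infty,\lambda}$.
 \end{itemize}
\end{itemize}

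\emph{Step 3 (irredundancy).} Pairs with different $U$ up to conjugacy, or different $\dim U$, are inequivalent. Within each case we separate orbits by invariants. These are the ranks of $A_1$, $A_2$ and of $A_1-cA_2$; whether $A_i$ lies in $U^2$ or in the square-zero part; and scalar invariants such as the ratio of the rank-one direction coefficients. For example, in the $A_{\lambda,\mu}$ family, $\lambda$ and $\mu$ are read off from the unique $c$ making $A_2-cA_1$ of rank one, together with the resulting coefficient.

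The main obstacle is the careful stabilizer computation in the two non-$\GL(U)$ cases. We must determine exactly which automorphisms of $U$ the stabilizer of $U$ induces: the triangular group with the $(t,t^2)$ weights in the $\mathrm{Span}\{E_{21}+E_{32},E_{31}\}$ case, and the torus of $\mathrm{Span}\{E_{21},E_{32}\}$ extended by a swap. Only with this in hand can we check that the parameters $\lambda,\mu$ in the stated normalizations are genuine invariants, so that the list is irredundant. We verify this by direct matrix computation with $g\in\lt_3(\field)$ together with the permutation swapping $e_1,e_3$ composed with transposition-type symmetries.
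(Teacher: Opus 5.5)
Your route differs from the paper's and is sound in outline. The paper normalizes $A_1$ first, via the $\lt_3(\field)$-orbits of Proposition~\ref{prop:classification of subspaces}(i), then moves $A_2$ by the stabilizer of $A_1$, and checks irredundancy afterwards using generated subalgebras and rank tests. You instead fix the span $U$ via part (ii) and classify bases of $U$ up to the image $\Gamma\subseteq\GL(U)$ of $\mathrm{Stab}(U)$, so irredundancy is automatic once $\Gamma$ is known exactly. Your $\Gamma$ is correct for $C$, $D$ and $\mathrm{Span}\{E_{21}+E_{32},E_{31}\}$.

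However, in the step you yourself flag as the main obstacle, $U=\mathrm{Span}\{E_{21},E_{32}\}$, your $\Gamma$ is wrong: there is no swap. Since $E_{21}E_{32}=0\neq E_{31}=E_{32}E_{21}$, the two rank-one lines are intrinsic, namely $\field E_{21}=\{X\in U: XU=0\}$ and $\field E_{32}=\{X\in U: UX=0\}$. Hence every $g\in\mathrm{Stab}(U)$ preserves each line, and $\Gamma$ is only the torus $E_{21}\mapsto\alpha E_{21}$, $E_{32}\mapsto\beta E_{32}$, realized by $\mathrm{diag}(1,\alpha,\alpha\beta)$. With a swap your transversal would be redundant: it would identify $A_{\lambda,\infty}$ with $A_{\infty,\lambda}$, and $A_{\lambda,\mu}$ with $A_{\lambda+\mu,-\mu}$. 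The tool you propose for the verification is the permutation $e_1\leftrightarrow e_3$ combined with transposition, i.e.\ $X\mapsto JX^{T}J$ with $J$ the permutation matrix of $(13)$. It does exchange $E_{21}$ and $E_{32}$, but it is an anti-automorphism, not a simultaneous conjugation. It carries $A_{\lambda,\infty}$ to $A_{\infty,\lambda}$, which lie in different orbits, since $\rank(A_1A_2)$ is $0$ for the first and $1$ for the second.

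The same confusion invalidates your separating invariant. For $A_{\lambda,\mu}$ there are two values of $c$ with $\rank(A_2-cA_1)=1$: $c=\lambda$ gives $\mu E_{32}$, and $c=\lambda+\mu$ gives $-\mu E_{21}$. Your recipe does not say which one is $\lambda$. Also, $A_{\lambda,\mu}$ is the family with $\rank A_1=2$, not the family with both components of rank two; for instance $A_{0,\mu}=(E_{21}+E_{32},\mu E_{32})$. With the correct torus $\Gamma$ your normalization argument goes through and shows that each orbit contains exactly one of $A_{\lambda,\mu}$, $A_{\lambda,\infty}$, $A_{\infty,\lambda}$. Intrinsically, $\lambda$ and $\lambda+\mu$ are the scalars with $A_1A_2=\lambda A_1^2$ and $A_2A_1=(\lambda+\mu)A_1^2$. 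So, for example, $\varphi=x_1x_2-\lambda x_1^2$ separates $A_{\lambda,\mu}$ from every $A_{\lambda',\mu'}$ with $\lambda'\neq\lambda$. (The first test in row A1 of Table~\ref{Tab2} has the same blind spot: $x_2-\lambda x_1$ also has rank $1$ on $A_{\lambda+\mu,-\mu}$.)

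A smaller point: in the $\mathrm{Span}\{E_{21}+E_{32},E_{31}\}$ case, the weight $t$ is used up in making the $(E_{21}+E_{32})$-coefficient of the relevant component equal to $1$, and nothing remains afterwards. So $\mu$ and $\lambda$ are genuine invariants and cannot be ``normalized using $t$''. Writing $A_i=x_iN+y_iN^2$ with $N=E_{21}+E_{32}$, they are $(x_1y_2-x_2y_1)/x_1^3$ if $x_1\neq 0$, and $y_1/x_2^2$ if $x_1=0$.
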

\begin{proof} 
It follows from the theorem on Jordan normal form that if \linebreak $\dim_\field\mathrm{Span}\{A_1,A_2\}\le 1$ 
for some $A=(A_1,A_2)\in \mathcal{N}_{3,2}$, then 
the $\GL_3(\field)$-orbit of $A$ contains an element of the form 
$B_{\lambda,0}$, $B_{\infty,0}$, $E_\lambda$, $E_\infty$, or $O$. 

Assume next that $A=(A_1,A_2)\in \mathcal{N}_{3,2}$ with 
$\dim_\field\mathrm{Span}\{A_1,A_2\}=2$.  
A well-known application of the Hilbert-Mumford criterion shows that every $\GL_3(\field)$-orbit of $A$ intersects $\slt_3(\field)\oplus \slt_3(\field)$, see for example \cite{kraft}. 
The space $\slt_3(\field)\oplus \slt_3(\field)$ is invariant with respect to the action of the subgroup 
$\lt_3(\field)$. 
By Proposition~\ref{prop:classification of subspaces} (i) the $\lt_3(\field)$-orbit of $A$ 
contains an element $B=(B_1,B_2)$ with  
$B_1\in \{E_{21}, E_{31}, E_{32}, E_{21}+E_{32}\}$. It is easy to check 
that one can transform $B_2$ by an appropriate element in the stabilizer of $B_1$ 
to get a pair of the form 
$A_{\lambda,\mu}$, $A_{\lambda,\infty}$, $A_{\infty,\lambda}$, $B_{\lambda,\mu}$ with $\mu\neq 0$, 
$C$, $D$, or to one of 
$(E_{31},E_{21})$, $(E_{32},E_{31})$, 
$(E_{31},\lambda(E_{21}+E_{32}))$.  Note that the permutation matrices corresponding to the transpositions 
$(23)$ and $(12)$ move $(E_{31},E_{21})$ and $(E_{32},E_{31})$ to $C$ and $D$, whereas 
an appropriate diagonal element of $\GL_3(\field)$ conjugates $(E_{31},\lambda(E_{21}+E_{32}))$ into 
$B_{\infty,\lambda^{-2}}$. This shows that the pairs given in the statement represent all the 
$\GL_3(\field)$-orbits in $\mathcal{N}_{3,2}$. 

It remains to show that no two of the pairs in our statement belong to the same $\GL_3(\field)$-orbit. 
The notation for each pair in our statement consists of a letter and (in most cases) some parameters 
from $\field\cup\{\infty\}$. 
The notation  is chosen so that the pairs $A$ and $B$ are denoted by the same letter 
if the subalgebra of $\slt_3(\field)$ generated by $A_1$ and $A_2$ coincides with the subalgebra of 
$\slt_3(\field)$ generated by $B_1$ and $B_2$ (and hence the stabilizer of $A$ in $\GL_3(\field)$ 
coincides with the stabilizer of $B$ in $\GL_3(\field)$). Moreover, when the letter parts for the 
names of $A$ and $B$ differ, their stabilizers are not even conjugate in $\GL_3(\field)$. 
This implies that two pairs denoted by 
different letters can not belong to the same $\GL_3(\field)$-orbit. 
It remains to show that no two different pairs denoted by the same letter 
belong to the same $\GL_3(\field)$-orbit. 
Table~\ref{Tab2} gives rank conditions to distinguish these orbit pairs.
\end{proof}

\subsection{The degeneration order for \texorpdfstring{$3\times 3$}{3x3} nilpotent matrix pairs.}

\begin{proposition}\label{prop:degenerations}
The Hasse diagram of the degeneration order for $\GL_3(\field)$-orbits 
of nilpotent $3\times 3$ matrix pairs is given in Figure~\ref{figure:hasse} 
(each orbit given in Proposition~\ref{prop:list of 3x3 nilpotent orbits} appears once 
in Figure~\ref{figure:hasse}, and the edges going down from it 
connect it to all its minimal degenerations).   
\end{proposition}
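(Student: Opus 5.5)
To establish the Hasse diagram we have to do two things. First, for every edge $X\to Y$ in Figure~\ref{figure:hasse} we show $X\ldeg Y$ explicitly. Second, for every pair $X,Y$ of orbits not comparable in the diagram we show $X\not\ldeg Y$. Finally we check minimality, i.e.\ that no further orbit lies strictly between the endpoints of an edge.

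\emph{Orbit dimensions.} We first compute the dimension of each orbit in Proposition~\ref{prop:list of 3x3 nilpotent orbits}. By the proof of Lemma~\ref{lemma:hom order implies dim ineq}, $\dim\mathcal{O}(A)=9-\dim\mathrm{Stab}(A)$, and the stabilizers are read off from the list of elementwise stabilizers of the six subalgebras. This gives dimension $6$ for $A_{\lambda,\mu},A_{\lambda,\infty},A_{\infty,\lambda}$ (generic algebra $\slt_3$); $6$ for $B_{\lambda,\mu}$ with $\mu\ne0$ and $B_{\infty,\lambda}$ with $\lambda\neq0$, via the algebra $\mathrm{Span}\{E_{21}+E_{32},E_{31}\}$; $6$ for $C$ and $D$; $4$ for $E_\lambda,E_\infty$; and $0$ for $O$.

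The families of dimension $6$ in the same isotropy type form $1$-parameter families inside $7$-dimensional sheets. Here the dimensions must be rechecked carefully, since the families $A_{\lambda,\mu}$ carry two parameters and hence sweep an $8$-dimensional open set, which is consistent with $\dim\mathcal{N}_{3,2}=8$.

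\emph{Positive degenerations.} Each positive degeneration is realised by a one-parameter subgroup or by an explicit curve $g(t)\cdot X$ with $t\to0$. For instance:
\begin{itemize}
\item conjugating by $\diag(1,t,t^2)$-type torus elements kills individual matrix units;
\item $A_{\lambda,\mu}\to B_{\lambda,0}$ as $\mu\to0$, via closure of the family;
\item $B_{\lambda,\mu}\to E$-type orbits by scaling.
\end{itemize}
When a target lies in the closure of a family rather than of a single orbit, we use that the closure of a one-parameter family of orbits of equal dimension inside its sheet contains the limit. Limits of the parameter family itself (for example $A_{\lambda,\mu}$ with $\mu\to\infty$ after rescaling) require the pair to be conjugated appropriately first, using a $\GL_3$ element depending on $t$.

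\emph{Non-degenerations.} This is the main obstacle. We use Lemma~\ref{lemma:deg implies hom}: $X\ldeg Y$ forces $\rank\varphi(X)\ge\rank\varphi(Y)$ for all $\varphi$, and it also forces $\dim\mathcal{O}(Y)<\dim\mathcal{O}(X)$ when $Y\neq X$. Orbits of equal dimension are therefore incomparable. For the remaining pairs we exhibit a separating rank function, reusing the rank invariants of Table~\ref{Tab2}. Typical choices are:
\begin{itemize}
\item $\rank(aA_1+bA_2)$ for specific $(a,b)$, which detects the Jordan type of linear combinations and hence the parameter $\lambda$;
\item $\rank(A_1A_2)$, $\rank(A_2A_1)$ and $\rank[A_1\ A_2]$, $\rank\begin{bmatrix}A_1\\ A_2\end{bmatrix}$, which detect the socle and top.
\end{itemize}
For example, $B_{\lambda,\mu}$ cannot degenerate to $E_{\lambda'}$ with $\lambda'\neq\lambda$, because $\rank(A_2-\lambda' A_1)=2$ for $E_{\lambda'}$ while it is $\le 2$ only with the right Jordan pattern. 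Organising this case check orbit type by orbit type, using which $\lambda$ is "seen" by the kernel of the pencil $aA_1+bA_2$, is the laborious core of the argument.

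Minimality of the drawn edges then follows from the dimension drops together with the absence of intermediate orbits of the right dimension.
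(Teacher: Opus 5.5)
Your overall strategy is the paper's: explicit curves $\varepsilon\mapsto g_\varepsilon\cdot A$ for the edges, then orbit dimensions plus rank functions (Lemma~\ref{lemma:deg implies hom}) to exclude all other degenerations. As written, however, the argument fails in three places.

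\emph{First, the dimension count is wrong.} The elementwise stabilizer of $\slt_3(\field)$ is $\{aI+cE_{31}\}$, which is $2$-dimensional. So $\mathcal{O}(A_{\lambda,\mu})$, $\mathcal{O}(A_{\lambda,\infty})$ and $\mathcal{O}(A_{\infty,\lambda})$ have dimension $7$, not $6$, and $\dim\mathcal{N}_{3,2}=9$, not $8$. Also, $B_{\lambda,0}$ and $B_{\infty,0}$ generate the same algebra $\mathrm{Span}\{E_{21}+E_{32},E_{31}\}$ as the other $B$'s, because $(E_{21}+E_{32})^2=E_{31}$; hence all $B$-orbits are $6$-dimensional. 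With your values, your own rule that orbits of equal dimension are incomparable would forbid the edges from the $A$-type orbits down to $\mathcal{O}(C)$ and $\mathcal{O}(D)$, which Figure~\ref{figure:hasse} asserts.

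\emph{Second, ``$A_{\lambda,\mu}\to B_{\lambda,0}$ via closure of the family'' is invalid and in fact false.} The relation $\ldeg$ concerns the closure of a single orbit. $B_{\lambda,0}$ lies in the closure of $\bigcup_{\mu\neq 0}\mathcal{O}(A_{\lambda,\mu})$ but in no single $\overline{\mathcal{O}(A_{\lambda,\mu})}$. Take $\varphi=(1+\lambda/\mu)x_1-\mu^{-1}x_2$. Then $\varphi(A_{\lambda,\mu})=E_{21}$ has rank $1$, while $\varphi(B_{\lambda,\mu'})=E_{21}+E_{32}-(\mu'/\mu)E_{31}$ has rank $2$ for every $\mu'$. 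So this is not an edge of the figure. Every edge must come from a curve inside one orbit. For $A\to C$, $A\to D$, and $C,D\to E_\lambda$ with $\lambda\neq 0$, diagonal curves applied to the listed representatives cannot work, since they only rescale existing matrix units. That is why Table~\ref{table:denerations} uses non-diagonal $g_\varepsilon$.

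\emph{Third, the non-degeneration check is not carried out, your sample is wrong, and your invariants cannot do the job.}
\begin{itemize}
\item On $E_{\lambda'}=(E_{21},\lambda' E_{21})$ one has $A_2-\lambda' A_1=0$, so your example computes the wrong rank.
\item Your listed invariants cannot separate $B_{\lambda,\mu}$ with $\mu\neq 0$ from $E_{\lambda'}$. Every nonzero $aA_1+bA_2$ has rank $\ge 1$ on $B_{\lambda,\mu}$ and rank $\le 1$ on $E_{\lambda'}$. All products of components vanish on $E_{\lambda'}$. The block row and column matrices have rank $2$ on $B_{\lambda,\mu}$ against $1$ on $E_{\lambda'}$. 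The same happens for $B_{\infty,\lambda}$ with $\lambda\neq 0$.
\item The missing idea is the quadratic relation satisfied by the $B$-pairs. The element $\varphi=x_2-\lambda x_1-\mu x_1^2$ vanishes on $B_{\lambda,\mu}$, hence on its orbit closure. But $\varphi(E_{\lambda'})=(\lambda'-\lambda)E_{21}$ and $\varphi(E_\infty)=E_{21}$. Similarly, $x_1-\lambda x_2^2$ vanishes on $B_{\infty,\lambda}$ but has rank $1$ on every $E_{\lambda'}$.
\end{itemize}

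With the correct dimensions $7,6,4,0$, the paper reduces everything to two checks. One is the $7$-dimensional orbits against the $B$-type orbits, where linear $\varphi$ suffice (rows 1--6 of Table~\ref{table:hom order}). The other is the $B$-type orbits against the wrong $E$'s, which needs the quadratic $\varphi$ above. Minimality of the drawn edges is then automatic, since there are no orbits of dimension $5$ or $1,2,3$.
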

\begin{figure}[ht]
\centering
\caption{Degenerations of $\GL_3(\field)$-orbits}
\vspace{1em}
\begin{tikzpicture}[scale=2, every node/.style={inner sep=2pt}]
  \node (d1) at (-5, 4) {\textcolor{teal}{7}};
  \node (d2) at (-5, 2.5) {\textcolor{teal}{6}};
  \node (d3) at (-5, 1) {\textcolor{teal}{4}};
  \node (d4) at (-5, 0) {\textcolor{teal}{0}};
  
  \draw[dashed] (-4.6, -0.2) -- (-4.6, 4.2);

  \node (A2) at (-3, 4) {$\mathcal{O}(A_{\infty, \lambda})$};
  \node (A) at (-1,4) {$\mathcal{O}(A_{\lambda, \mu})$};
  \node (B) at (-4,2.5) {$\mathcal{O}(B_{\rho, \lambda})$};
  \node (C) at (1,4) {$\mathcal{O}(A_{\lambda, \infty})$};
  \node (F) at (2,2.5) {$\mathcal{O}(B_{\infty, \lambda})$};
  \node (D2) at (-2, 2.5) {$\mathcal{O}(D)$};
  \node (D) at (0,2.5) {$\mathcal{O}(C)$};
  \node (E) at (-2.5,1) {$\mathcal{O}(E_{\rho})$};
  \node (G) at (0.5,1) {$\mathcal{O}(E_{\infty})$};
  \node (O) at (-1,0) {$\mathcal{O}(O)$};

  \draw (A) -- (D);
  \draw (B) -- (E);
  \draw (C) -- (D);
  \draw (F) -- (G);
  \draw (D) -- (E);
  \draw (D) -- (G);
  \draw (E) -- (O);
  \draw (G) -- (O);
  \draw (A2) -- (D2);
  \draw (A2) -- (D);
  \draw (A) -- (D2);
  \draw(C) -- (D2);
  \draw(D2) -- (E);
  \draw(D2) -- (G);
\end{tikzpicture}
\label{figure:hasse}
\end{figure}

\begin{proof} 
We show that the degenerations indicated by the edges in 
Figure~\ref{figure:hasse} indeed exist in the following way. For each edge going down 
from a pair $A$ to a pair $B$ in Figure~\ref{figure:hasse} we give in Table~\ref{table:denerations} 
a morphism $\field^\times \to \GL_3(\field)$, 
$\varepsilon\mapsto g_\varepsilon$ of affine varieties, such that the induced morphism 
$\field^\times \to (\field^{3\times 3})^2$, $\varepsilon\mapsto g_\varepsilon \cdot A$ 
extends to a morphism $\field\to (\field^{3\times 3})^2$, $\varepsilon\mapsto A_\varepsilon$, 
such that $A_0=B$ (or $A_0$ lies in the 
$\GL_3(\field)$-orbit of $B$). As for each $\varepsilon \neq 0$, 
$A_{\varepsilon}$ lies in the orbit of $A$, we conclude that $B$ belongs to the Zariski closure 
of the orbit of $A$. 
This then implies that the degenerations indicated in Figure~\ref{figure:hasse} all exist. 

Next we need to show that there are no more degenerations. Note that if $\mathcal{O}(B)$ is contained 
in the Zariski closure of 
$\mathcal{O}(A)$, then $\dim\mathcal{O}(B)<\dim\mathcal{O}(A)$ 
(here $\dim$ stands for the dimension in the sense of algebraic geometry). 
Note that $\dim\mathcal{O}(A)$ is the difference of $\dim\GL_3(\field)=9$ and the dimension of 
the stabilizer of $A$. The possible dimensions of the orbits in $\mathcal{N}_{3,2}$ are 
$7,6,4,0$, as indicated in Figure~\ref{figure:hasse}. Inspecting Figure~\ref{figure:hasse} we see that 
it remains to show that the $7$-dimensional orbits do not contain in their Zariski closure the $6$-dimensional 
orbits other than $\mathcal{O}(C)$ or $\mathcal{O}(D)$, and that the $6$-dimensional orbits $\mathcal{O}(B_{\lambda,\mu})$ 
do not contain in their Zariski closure certain $4$-dimensional orbits $\mathcal{O}(E_\nu)$. 
For each such orbit pairs $\mathcal{O}(A)$, $\mathcal{O}(B)$ we give in Table~\ref{table:hom order} an 
element $\varphi\in R=\field\langle x_1,x_2\rangle$ with $\rank\varphi(A)<\rank\varphi(B)$. 
It follows that the relation $A\lhom B$ does not hold, end hence $A\ldeg B$ can not hold by Lemma~\ref{lemma:deg implies hom}. 
\end{proof}

\begin{remark}
Note that the closure of a single orbit may contain infinitely many orbits. 
Indeed, as is shown by Figure~\ref{figure:hasse}, the closure of the orbit of $D$ contains 
the orbits of $E_{\rho}$ for all $\rho\in \field$. 
Note also that $B_{\rho,\lambda}$ degenerates to 
$E_{\nu}$ if and only if $\nu=\rho$. 
\end{remark}

\begin{proposition} \label{prop:hom implies deg in N_{3,2}}
If $A\lhom B$ for some $A,B\in \mathcal{N}_{3,2}$  then $A\ldeg B$. 
\end{proposition}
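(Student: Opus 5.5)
We will argue by comparing the explicit poset of Proposition~\ref{prop:degenerations} with the hom-order, using the orbit list of Proposition~\ref{prop:list of 3x3 nilpotent orbits}. Suppose $A\lhom B$ with $A,B\in\mathcal{N}_{3,2}$ and $\mathcal{O}(A)\neq\mathcal{O}(B)$. By Lemma~\ref{lemma:hom order implies dim ineq} we get $\dim\mathcal{O}(A)>\dim\mathcal{O}(B)$. Since the orbit dimensions are $7,6,4,0$, it suffices to show that every pair of orbits with strictly decreasing dimension that is \emph{not} comparable in Figure~\ref{figure:hasse} is also not comparable in the hom-order.

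First we dispose of the easy cases. Every orbit degenerates to $\mathcal{O}(O)$, since all pairs are nilpotent. Next, we read off from Figure~\ref{figure:hasse} the transitive closure. Every $7$-dimensional orbit lies above $\mathcal{O}(C)$ or $\mathcal{O}(D)$, and both of these lie above all $4$-dimensional orbits $\mathcal{O}(E_\rho)$ and $\mathcal{O}(E_\infty)$. Hence every $7$-dimensional orbit degenerates to every orbit of dimension $4$ or $0$. Also $\mathcal{O}(B_{\infty,\lambda})\ldeg \mathcal{O}(E_\infty)$ and $\mathcal{O}(B_{\rho,\lambda})\ldeg\mathcal{O}(E_\rho)$. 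So the only pairs with decreasing dimension that are not degenerations are the following:
\begin{enumerate}
\item a $7$-dimensional orbit over a $6$-dimensional orbit of type $B_{\rho,\lambda}$, $B_{\infty,\lambda}$, or the one of $C$, $D$ not below it;
\item $\mathcal{O}(B_{\rho,\lambda})$ over $\mathcal{O}(E_\nu)$ with $\nu\ne\rho$ (including $\nu=\infty$);
\item $\mathcal{O}(B_{\infty,\lambda})$ over $\mathcal{O}(E_\nu)$ with $\nu\in\field$.
\end{enumerate}

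These are exactly the pairs handled in the proof of Proposition~\ref{prop:degenerations} via Table~\ref{table:hom order}. For each such pair that table supplies $\varphi\in R$ with $\rank\varphi(A)<\rank\varphi(B)$, so $A\lhom B$ fails. The proposition then follows immediately.

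The main obstacle is making sure this case list is exhaustive, including the parametric families. In particular we must handle $\mathcal{O}(A_{\infty,\lambda})$, which lies above $\mathcal{O}(C)$ and $\mathcal{O}(D)$ but not above the $B$-orbits. If some entry is missing, we would supply a witness directly. For $B$-type targets, the key observation is that in a $7$-dimensional pair the generated algebra is all of $\slt_3(\field)$, while in $B_{\rho,\lambda}$ the element $x_2-\rho x_1$ acts with rank $\le1$. Taking $\varphi=x_2-\rho x_1$ (respectively $x_1$ for $B_{\infty,\lambda}$) should give rank $1$ or $0$ on the $B$ side, versus rank $\ge1$ on the $A$ side. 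If that fails, we would use $\varphi$ given by the $2\times1$ column $(x_1,x_2)^T$ or by products such as $x_1x_2$, which distinguish the socle and radical structure.
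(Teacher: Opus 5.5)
Your proposal is correct and is essentially the paper's proof: you dispose of $\mathcal{O}(A)=\mathcal{O}(B)$, use Lemma~\ref{lemma:hom order implies dim ineq} to force $\dim\mathcal{O}(A)>\dim\mathcal{O}(B)$, and then quote Table~\ref{table:hom order} for exactly the non-degenerations of decreasing dimension (your clause about ``the one of $C$, $D$ not below it'' is vacuous, since every $7$-dimensional orbit lies above both). The fallback in your last paragraph is not needed, and it has the inequality reversed: to refute $A\lhom B$ you need $\rank\varphi(A)<\rank\varphi(B)$, so $\varphi$ must have small rank on the $A$ side, as with the table's $\varphi=x_2-\lambda x_1$ where $\lambda$ is the parameter of $A$, whereas small rank on the $B$ side proves nothing.
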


\begin{proof} 
Assume that $A\lhom B$. If $B\lhom A$ as well, than $\mathcal{O}(A)=\mathcal{O}(B)$ by 
\cite{Auslander} and Lemma~\ref{lemma:hom-rank}, as we pointed out before. 
So it remains to deal with the case that $A\lshom B$. Then 
$\dim\mathcal{O}(A)>\dim\mathcal{O}(B)$ by Lemma~\ref{lemma:hom order implies dim ineq}. 
For such pairs $A,B$ in the proof of Proposition~\ref{prop:degenerations} we demonstrated 
with Table~\ref{table:hom order} that 
if $A$ does not degenerate to $B$, then we do not have $A\lhom B$. 
Thus our statement holds. 
\end{proof} 

\section{\texorpdfstring{$\mathcal{N}_{3,m}$}{N3m} for arbitrary \texorpdfstring{$m$}{m}}
\label{sec:N3m}

For positive integers $d,m,n$ with $d<m$ and 
$\varphi_j\in \field\langle x_1,\dots,x_d\rangle$ 
($j=d+1,\dots,m$) 
set 
\[(\field^{n\times n})^m_\varphi
:=\{A\in (K^{n\times n})^m\mid A_j
=\varphi_j(A_1,\dots,A_d)\text{ for }j=d+1,\dots,m\}.\]
We shall use the following obvious fact:

\begin{lemma} \label{lemma:G-iso}
$(\field^{n\times n})^m_\varphi$ is a Zariski closed 
$\GL_n(\field)$-subvariety of $(\field^{n\times n})^m$, 
and the projection 
$\pi:(\field^{n\times n})^m_\varphi\to (\field^{n\times n})^d$,  
$(A_1,\dots,A_m)\mapsto (A_1,\dots,A_d)$ 
is an isomorphism of $\GL_n(\field)$-varieties. 
In particular, for $A\in (\field^{n\times n})^m_\varphi$ and 
$B\in (\field^{n\times n})^m$ we have 
$A\ldeg B$ if and only if $B\in (\field^{n\times n})^m_\varphi$ and 
$\pi(A)\ldeg \pi(B)$ in $(\field^{n\times n})^d$. 
\end{lemma}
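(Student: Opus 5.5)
The lemma has three parts: closedness and invariance of $(\field^{n\times n})^m_\varphi$, the equivariant isomorphism $\pi$, and the transfer of degenerations. I would prove them in that order, each from the previous one.

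\emph{Closed and invariant.} The subset $(\field^{n\times n})^m_\varphi$ is the zero locus of the entries of the matrices $A_j-\varphi_j(A_1,\dots,A_d)$ for $j=d+1,\dots,m$. These entries are polynomial functions of the entries of $A$, so the set is Zariski closed. It is $\GL_n(\field)$-stable because substitution commutes with conjugation: $\varphi_j(gA_1g^{-1},\dots,gA_dg^{-1})=g\varphi_j(A_1,\dots,A_d)g^{-1}$. The identity holds since conjugation by $g$ is an algebra automorphism of $\field^{n\times n}$ fixing $I$.

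\emph{The isomorphism.} The projection $\pi$ is a $\GL_n(\field)$-equivariant morphism. Its inverse is
\[(A_1,\dots,A_d)\mapsto (A_1,\dots,A_d,\varphi_{d+1}(A_1,\dots,A_d),\dots,\varphi_m(A_1,\dots,A_d)).\]
This map is a morphism because its coordinates are polynomial, and it is equivariant by the same substitution identity. So $\pi$ is an isomorphism of $\GL_n(\field)$-varieties.

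\emph{Degenerations.} Let $A\in(\field^{n\times n})^m_\varphi$.

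Suppose first that $A\ldeg B$. The orbit $\mathcal{O}(A)$ lies in the closed $\GL_n(\field)$-stable set $(\field^{n\times n})^m_\varphi$, so its closure does too, and hence $B\in(\field^{n\times n})^m_\varphi$. The map $\pi$ is continuous and equivariant, so $\pi(B)\in\pi(\overline{\mathcal{O}(A)})\subseteq\overline{\mathcal{O}(\pi(A))}$, that is, $\pi(A)\ldeg\pi(B)$.

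Conversely, suppose $B\in(\field^{n\times n})^m_\varphi$ and $\pi(A)\ldeg\pi(B)$. The map $\pi$ is a homeomorphism of $(\field^{n\times n})^m_\varphi$ onto $(\field^{n\times n})^d$ carrying $\mathcal{O}(A)$ onto $\mathcal{O}(\pi(A))$. Hence it maps the closure of $\mathcal{O}(A)$ taken inside $(\field^{n\times n})^m_\varphi$ onto $\overline{\mathcal{O}(\pi(A))}$. Because $(\field^{n\times n})^m_\varphi$ is closed in $(\field^{n\times n})^m$, closure inside it agrees with closure in the ambient space. Therefore $B=\pi^{-1}(\pi(B))\in\overline{\mathcal{O}(A)}$.

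None of these steps is a real obstacle. The only point needing care is the one just used: closure computed in the subvariety coincides with closure in $(\field^{n\times n})^m$, and this is exactly what the closedness from the first step provides.
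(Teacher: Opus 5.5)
Your proof is correct, and the paper offers nothing to compare it against: it states the lemma as an ``obvious fact'' with no proof. Your argument is the routine check the paper leaves to the reader. Closedness and invariance come from the polynomial equations $A_j=\varphi_j(A_1,\dots,A_d)$. The inverse of $\pi$ is the polynomial equivariant section. The degeneration statement then follows because closure in the closed subvariety $(\field^{n\times n})^m_\varphi$ agrees with closure in $(\field^{n\times n})^m$.
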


\begin{lemma}\label{lemma:hom implies phi}
Let $A\in (\field^{n\times n})^m_\varphi$ and $B\in (\field^{n\times n})^m$. 
If $A\lhom B$, then $B\in (\field^{n\times n})^m_\varphi$.    
\end{lemma}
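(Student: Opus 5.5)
We choose, for each $j=d+1,\dots,m$, a suitable $1\times 1$ matrix $\psi_j$ over $R=\field\langle x_1,\dots,x_m\rangle$, namely the single element
\[\psi_j:=x_j-\varphi_j(x_1,\dots,x_d)\in R .\]
For any $C\in(\field^{n\times n})^m$, evaluating gives $\psi_j(C)=C_j-\varphi_j(C_1,\dots,C_d)$, which is an $n\times n$ matrix. Therefore $C\in(\field^{n\times n})^m_\varphi$ exactly when $\rank\psi_j(C)=0$ for all $j=d+1,\dots,m$. In other words, membership in $(\field^{n\times n})^m_\varphi$ is described by the vanishing of finitely many rank functions of exactly the type that appears in the definition of $\lhom$.

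The argument then runs as follows. Since $A\in(\field^{n\times n})^m_\varphi$, we have $\rank\psi_j(A)=0$ for every $j$. The hypothesis $A\lhom B$ means $\rank\varphi(A)\ge\rank\varphi(B)$ for every matrix $\varphi$ over $R$. Applying this to $\varphi=\psi_j$ gives $0\ge\rank\psi_j(B)$, so $\psi_j(B)=0$. Equivalently, $B_j=\varphi_j(B_1,\dots,B_d)$ for each $j$, and hence $B\in(\field^{n\times n})^m_\varphi$.

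No step here is a real obstacle. The only point to check is that $\psi_j$ is an admissible test matrix in the definition of $\lhom$. It is: the definition allows arbitrary $k\times l$ matrices over $R$, and $\varphi_j(x_1,\dots,x_d)$ lies in $\field\langle x_1,\dots,x_d\rangle\subseteq R$. The substitution $x_i\mapsto C_i$ is an algebra homomorphism $R\to\field^{n\times n}$, so $\psi_j(C)$ is indeed $C_j-\varphi_j(C_1,\dots,C_d)$.

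If one wanted to avoid the rank formulation, the same conclusion would follow from Lemma~\ref{lemma:hom-rank}(ii) with $X=R/R\psi_j$. However, that module may be infinite dimensional, which would force the truncation argument from the proof of that lemma. The direct rank argument above is simpler, so I would use it.
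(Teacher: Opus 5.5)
Your proposal is correct and follows the paper's argument. The paper applies the defining rank inequality of $\lhom$ to the single element $x_j-\varphi_j(x_1,\dots,x_d)\in R$, obtaining $0=\rank(A_j-\varphi_j(A_1,\dots,A_d))\ge\rank(B_j-\varphi_j(B_1,\dots,B_d))$, and concludes $B_j=\varphi_j(B_1,\dots,B_d)$ exactly as you do.
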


\begin{proof}
    By $A\lhom B$ we have 
    \[0=\rank(A_j-\varphi_j(A_1,\dots,A_d))\ge \rank(B_j-\varphi(B_1,\dots,B_d)), \]
    implying that $B_j=\varphi(B_1,\dots,B_d)$ for $j=d+1,\dots,m$, i.e. 
$B\in (\field^{n\times n})^m_\varphi$. 
\end{proof}

\begin{lemma} \label{lemma:2 generators}
For any $A=(A_1,\dots,A_m)\in \slt_3(\field)^m$ there exist 
$1\le i,j\le m$ such that $A_i$ and $A_j$ generate 
the same associative subalgebra of $\slt_3(\field)$ as $A_1,\dots,A_m$. 
\end{lemma}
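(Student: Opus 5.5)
Let $\mathcal{A}$ be the associative subalgebra of $\slt_3(\field)$ generated by $A_1,\dots,A_m$ (nonunital, i.e. spanned by all nonempty products). The plan is a case analysis on $\mathcal{A}$, which is one of the algebras in Proposition~\ref{prop:classification of subspaces}(iii) after conjugation. Conjugating the tuple does not affect the claim, so we may assume $\mathcal{A}$ is literally one of $\{0\}$, $\mathrm{Span}\{E_{21}\}$, $\mathrm{Span}\{E_{31},E_{32}\}$, $\mathrm{Span}\{E_{21},E_{31}\}$, $\mathrm{Span}\{E_{21}+E_{32},E_{31}\}$, $\slt_3(\field)$. We then show in each case that two of the $A_i$ already generate $\mathcal{A}$ (we allow $i=j$).

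\textbf{Cases with $\dim\mathcal{A}\le 2$ and $\mathcal{A}^2=0$.} If $\mathcal{A}=0$, any $i=j$ works. In the three cases $\mathrm{Span}\{E_{21}\}$, $\mathrm{Span}\{E_{31},E_{32}\}$ and $\mathrm{Span}\{E_{21},E_{31}\}$, every product of two elements of $\mathcal{A}$ vanishes. Hence $\mathcal{A}=\mathrm{Span}\{A_1,\dots,A_m\}$, and we simply choose $A_i,A_j$ forming a spanning subset (a basis) of this span of dimension $\le 2$.

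\textbf{Case $\mathcal{A}=\mathrm{Span}\{E_{21}+E_{32},E_{31}\}$.} Here $\mathcal{A}^2=\field E_{31}$, so $\mathrm{Span}\{A_k\}+\field E_{31}=\mathcal{A}$. Therefore some $A_i$ has a nonzero coefficient of $E_{21}+E_{32}$, and then $A_i^2$ is a nonzero multiple of $E_{31}$. So $A_i$ alone generates $\mathcal{A}$, and we take $j=i$.

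\textbf{Case $\mathcal{A}=\slt_3(\field)$.} This is the main step. Since $\slt_3^2=\field E_{31}$, the images of $A_1,\dots,A_m$ in $\slt_3/\field E_{31}\cong\field^2$ (coordinates $(a_{21},a_{32})$) must span. If some $A_i$ has both $a_{21},a_{32}\neq0$, then $A_i^2\neq0$ is a multiple of $E_{31}$, and we pick any $A_j$ whose image is independent of that of $A_i$. Then $A_i,A_j,A_i^2$ span $\slt_3$.

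Otherwise every image lies on one of the two coordinate axes, so there are $A_i$ with image $(a,0)$ and $A_j$ with image $(0,b)$, where $a,b\neq0$. Then $A_jA_i=ab\,E_{31}\neq0$, because the $(3,1)$ entry of a product of strictly lower triangular matrices $XY$ is $x_{32}y_{21}$. So again $A_i,A_j$ together with their product span $\slt_3(\field)$. In each case the subalgebra generated by $A_i,A_j$ is contained in $\mathcal{A}$ and contains a spanning set of it, which completes the proof.
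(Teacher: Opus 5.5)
Your proof is correct. In the main case $\mathcal{A}=\slt_3(\field)$ it coincides with the paper's argument. Either a rank-$2$ component $A_i$ gives the spanning set $A_i,A_i^2,A_j$, or there are components off $\field E_{31}$ in both planes $\mathrm{Span}\{E_{21},E_{31}\}$ and $\mathrm{Span}\{E_{32},E_{31}\}$, whose product is a nonzero multiple of $E_{31}$. (The paper uses $(A_i+A_j)^2$, which here equals your $A_jA_i$.)

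The difference lies in how the remaining cases are handled. The paper does not invoke Proposition~\ref{prop:classification of subspaces}(iii) at all. It observes that a maximal linearly independent subset of the components generates the same algebra. So one is done unless the components span a $3$-dimensional space, which must then be all of $\slt_3(\field)$.

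Your route goes through the classification of the generated subalgebra up to conjugacy, a result the paper only asserts as an easy calculation, so it is somewhat heavier. It does give marginally sharper information: for instance, a single component already suffices when $\mathcal{A}$ is conjugate to $\mathrm{Span}\{E_{21}+E_{32},E_{31}\}$.

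In fact your case analysis only uses $\slt_3(\field)^2=\field E_{31}$ and $\mathcal{A}\subseteq \mathrm{Span}\{A_1,\dots,A_m\}+\field E_{31}$. So the appeal to the classification and the conjugation step could be dropped entirely.
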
 

\begin{proof}
A maximal linearly independent subset of $\{A_1,\dots,A_m\}$ generates the same associative subalgebra of 
$\slt_3(\field)$ as $A_1,\dots,A_m$. Therefore we are done, unless the components of $A$ span 
$\slt_3(\field)$, that we assume from now on. If $A$ has a rank $2$ component $A_i$, then 
$A_i$ and $A_i^2$ are linearly independent. Moreover, there exists an $A_j$ not contained in 
the subspace spanned by $A_i$ and $A_i^2$. Then necessarily $A_i, A_i^2,A_j$ span 
$\slt_3(\field)$, so $A_i$ and $A_j$ generate $\slt_3(\field)$ as an algebra. 
Assume finally that no component of $A$ has rank $2$. Then $A$ has a component $A_i$ in 
$\mathrm{Span}\{E_{21},E_{31}\}\setminus\mathrm{Span}\{E_{31}\}$, and a component 
$A_j\in \mathrm{Span}\{E_{32},E_{31}\}\setminus\mathrm{Span}\{E_{31}\}$. 
Then $A_i,A_j,(A_i+A_j)^2$ are linearly independent, hence they span $\slt_3(\field)$. 
\end{proof}

\begin{theorem}
    For $A,B\in \mathcal{N}_{3,m}$ we have $A\ldeg B$ if and only if 
    $A\lhom B$. 
\end{theorem}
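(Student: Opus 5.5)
One direction is Lemma~\ref{lemma:deg implies hom}, so we only need to show that $A\lhom B$ implies $A\ldeg B$ for $A,B\in\mathcal{N}_{3,m}$. The plan is to reduce to the case $m=2$, which is settled by Proposition~\ref{prop:hom implies deg in N_{3,2}}, using the subvarieties $(\field^{3\times 3})^m_\varphi$ together with Lemmas~\ref{lemma:G-iso}, \ref{lemma:hom implies phi} and \ref{lemma:2 generators}.

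First we normalize $A$. Since $A$ is nilpotent, after replacing $A$ by a conjugate we may assume $A\in\slt_3(\field)^m$; this changes neither $\ldeg$ nor $\lhom$, since both are orbit invariants. By Lemma~\ref{lemma:2 generators} there are indices $i,j$ such that $A_i,A_j$ generate the same associative subalgebra as all the $A_k$. In a nilpotent algebra without unit, every element of the subalgebra generated by $A_i,A_j$ is a noncommutative polynomial without constant term in $A_i,A_j$. So for each $k$ there is $\varphi_k\in\field\langle x_1,x_2\rangle$ with $A_k=\varphi_k(A_i,A_j)$. Next we permute the components so that $i,j$ become the first two; a fixed coordinate permutation on $(\field^{3\times3})^m$ commutes with the $\GL_3(\field)$-action and preserves both preorders (for $\lhom$, rename the variables in $\varphi$). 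If $i=j$, we pick any second index, or the zero polynomial, to keep $d=2$. Then $A\in(\field^{3\times3})^m_\varphi$ with $d=2$.

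Now suppose $A\lhom B$. Lemma~\ref{lemma:hom implies phi} gives $B\in(\field^{3\times3})^m_\varphi$. We next check that $\pi(A)\lhom\pi(B)$ in $(\field^{3\times3})^2$. This holds because any matrix $\psi$ over $\field\langle x_1,x_2\rangle$ may be regarded as a matrix over $R=\field\langle x_1,\dots,x_m\rangle$, and $\psi(A)=\psi(\pi(A))$, $\psi(B)=\psi(\pi(B))$. Moreover $\pi(A)\in\mathcal{N}_{3,2}$. We also need $\pi(B)\in\mathcal{N}_{3,2}$ for Proposition~\ref{prop:hom implies deg in N_{3,2}} to apply as stated. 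This follows because $B$ itself is nilpotent by hypothesis; alternatively, $A\lhom B$ with $A_1,\dots,A_m$ generating a nilpotent algebra forces every monomial of degree $3$ to vanish at $B$, by comparing ranks. Proposition~\ref{prop:hom implies deg in N_{3,2}} then yields $\pi(A)\ldeg\pi(B)$, and Lemma~\ref{lemma:G-iso} lifts this to $A\ldeg B$.

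The main obstacle is essentially already handled by Lemma~\ref{lemma:2 generators}, namely that two components suffice to generate the algebra. What remains is bookkeeping: making the reduction compatible with the conjugation and the permutation of components, and checking that $\lhom$ restricts correctly to the first two coordinates. Both of these are routine, as indicated above.
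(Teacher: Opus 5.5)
Your proposal is correct and follows the paper's proof essentially step for step. The paper also invokes Lemma~\ref{lemma:2 generators} (with a permutation of components ``by symmetry''), Lemma~\ref{lemma:hom implies phi}, the restriction of $\lhom$ to the first two components, Proposition~\ref{prop:hom implies deg in N_{3,2}}, and Lemma~\ref{lemma:G-iso}; your extra bookkeeping (conjugating $A$ into $\slt_3(\field)^m$ and handling $i=j$) only makes explicit what the paper leaves implicit.
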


\begin{proof} 
By Lemma~\ref{lemma:deg implies hom} we know that $A\ldeg B$ implies $A\lhom B$. 
To prove the reverse implication, assume that $A\lhom B$. 
The case $m\le 2$ is covered by Proposition~\ref{prop:hom implies deg in N_{3,2}}. 
It remains to deal with the case $m\ge 3$. By Lemma~\ref{lemma:2 generators} and by symmetry,  
we may assume that $A_1,A_2$ generate the subalgebra of $\slt_3(\field)$ generated by all the components of $A$. 
This means that for $j=3,\dots,m$ there exist $\varphi_j\in \field\langle x_1,x_2\rangle$ with 
$A_j=\varphi_j(A_1,A_2)$. That is, $A\in (\field^{3\times 3})^m_\varphi$. 
Then $A\lhom B$ implies by Lemma~\ref{lemma:hom implies phi} that $B\in (\field^{3\times 3})^m_\varphi$. 
Moreover, it is immediate from the definition of $\lhom$ that $A\lhom B$ implies $\pi(A)\lhom \pi(B)$, where 
$\pi$ is the projection mapping an $m$-tuple of $3\times 3$ matrices to the pair consisting of the first two matrix 
components. By the special case $m=2$ of our statement we know that $\pi(A)\lhom \pi(B)$ implies 
$\pi(A)\ldeg \pi(B)$. As both $A,B$ belong to $(\field^{3\times 3})^m_\varphi$, by Lemma~\ref{lemma:G-iso} we conclude 
that $A\ldeg B$, as desired. 
\end{proof}

\section{The Hesselink stratification of \texorpdfstring{$\mathcal{N}_{3,2}$}{N32}} \label{sec:hesselink}

General results of \cite{kempf}, \cite{ness}, \cite{Hesselink:1}, \cite{Hesselink:2}, 
\cite{kirwan}, \cite{kempf-ness} were applied to the study of the $\GL_n(\field)$-variety 
$\mathcal{N}_{n,m}$ in \cite{lebruyn}. 
Recall that the Hesselink stratification of $\mathcal{N}_{n,m}$ is a partition 
$\mathcal{N}_{n,m}=\bigsqcup_{\beta\in \mathcal{B}}S_\beta$ 
into the disjoint union of finitely many locally closed smooth $\GL_n(\field)$-subvarieties, such that the Zariski closure of any stratum $S_\beta$ is the union of  certain strata. So we have a partial order $\ge$ on $\mathcal{B}$ such that 
the Zariski closure $\overline{S}_\beta=\cup_{\gamma\ge \beta}S_\gamma$; we call this the \emph{degeneration order of the Hesslink strata}.  

The set $\mathcal{B}$ parametrizing the Hesselink strata is a subset of 
a combinatorially defined finite set $\widetilde{\mathcal{B}}$ given in \cite{Hesselink:2}. The elements of $\widetilde{\mathcal{B}}$ 
are increasing sequences of length $n$, consisting of rational numbers. 
Le Bruyn in \cite{lebruyn} associates with each $\beta\in \widetilde{\mathcal{B}}$ 
the following data: a parabolic subgroup $P_\beta$ of 
$\GL_n(\field)$ (consisting of all the matrices that have a zero entry 
in some specified positions above the main diagonal), 
a $P_\beta$-invariant coordinate subspace $U_\beta$ of the space 
$\mathfrak{n}_n(\field)$ of strictly lower triangular $n\times n$ matrices, 
and a coordinate subspace $W_\beta$ of $U_\beta$, which is invariant under 
the action of the Levi subgroup $L_\beta=\prod_{j=1}^d\GL_{n_j}(\field)$ 
of $P_\beta$ (here $\sum_{j=1}^d n_j=n$), and an integer vector 
$\theta_\beta\in\mathbb{Z}^d$. 
The $L_\beta$-module $W_\beta$ turns out to be isomorphic to the space of representations with dimension vector $(n_1,\dots,n_d)$ of some quiver $Q_\beta$.  
It is pointed out in \cite{lebruyn} that $\beta$ belongs to the subset $\mathcal{B}$ of 
$\widetilde{\mathcal{B}}$ if and only if $Q_\beta$ has a $\theta_\beta$-semistable representation of dimension vector $(n_1,\dots,n_d)$ in the sense of \cite{king}. 
Write $W_\beta^{ss}$ for the subset of $\theta_\beta$-semistable points in $W_\beta$. 
It is a non-empty Zariski open subset of $W_\beta$ for each $\beta\in\mathcal{B}$ 
(in fact it is the complement in $W_\beta$ of the common zero locus of all relative $L_\beta$-invariant polynomial functions on $W_\beta$ whose weight is a positive integer multiple of $\theta_\beta$). Recall that $W_\beta$ is a coordinate subspace 
of $U_\beta$. Denote by $\pi:U_\beta\to W_\beta$ the canonical projection, and 
set $U_\beta^{ss}:=\pi^{-1}(W_\beta^{ss})$. 
Now we have 
\[S_\beta=\GL_n(\field)\cdot U_\beta^{ss},\]
and the action map $\GL_n(\field)\times U_\beta^{ss}$ factors through an isomorphism 
\[\GL_n(\field)\times_{P_\beta} U_\beta^{ss}\cong S_\beta.\]
In particular, each $\GL_n(\field)$-orbit in $S_\beta$ intersects 
$U_\beta^{ss}$ in a unique $P_\beta$-orbit.  

Let us turn to the results on $\mathcal{N}_{3,2}$ in \cite{lebruyn}. 
In that case $\widehat{\mathcal{B}}=\mathcal{B}$ has $5$ elements: 

\[\begin{array}{|c|c|c|c|c|}
\hline
 \beta_1& \beta_2&\beta_3&\beta_4&\beta_5 \\ \hline \hline 
-1& -2/3& -1/3& -1/2& 0 \\
\hline
0 & 1/3& -1/3& 0& 0 \\
\hline
1& 1/3& 2/3 & 1/2& 0 \\
\hline
\end{array}\]

\vspace{4pt}

The Hasse diagram of the degeneration order on the Hesselink strata is given on 
Figure~\ref{figure:hasse2}.  

\begin{figure}[ht] 
\centering
\caption{Degeneration order on the Hesselink strata}
\vspace{1em}
\begin{tikzpicture}[scale=2, every node/.style={inner sep=2pt}]

  \node (S1) at (-1, 4) {$S_{\beta_{1}}$};
  \node (S2) at (-2, 3) {$S_{\beta_{2}}$};
  \node (S3) at (0, 3) {$S_{\beta_{3}}$}; 
  \node (S4) at (-1, 2) {$S_{\beta_{4}}$};
  \node (S5) at (-1, 1) {$S_{\beta_{5}}$};

  \draw (S1) -- (S2);
  \draw (S1) -- (S3);
  \draw (S2) -- (S4);
  \draw (S3) -- (S4);
  \draw (S4) -- (S5);
\end{tikzpicture}
\label{figure:hasse2}
\end{figure}
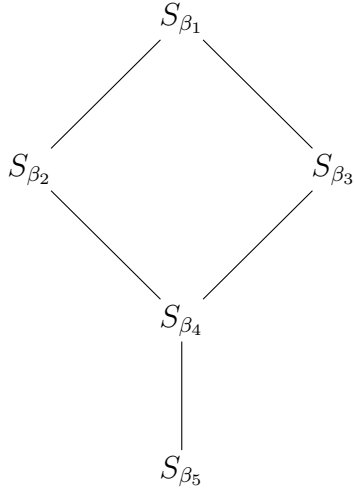

A $1$-parameter subgroup of $\GL_3(\field)$ is a destabilizing 
subgroup for $A=(A_1,A_2)\in\mathcal{N}_{3,2}$ 
if and only if it is a destabilizing subgroup for each matrix in the subalgebra of $\mathfrak{n}_3(\field)$ generated by $A_1$ and $A_2$. 
Therefore if the components of $A=(A_1,A_2)\in\mathcal{N}_{3,2}$ generate the same subalgebra of $\mathfrak{n}_3(\field)$ as the components of 
$B=(B_1,B_2)\in\mathcal{N}_{3,2}$, then $A$ and $B$ belong to the same 
Hesslink stratum, since $A$ and $B$ have the same sets of destabilizing subgroups. 
It follows that the orbit representatives in Proposition \ref{prop:list of 3x3 nilpotent orbits} denoted by the same letter from $\{A,B,C,D,E,O\}$ belong to the same Hesselink stratum. 
The exact distribution in the Hesselink strata of the $\GL_3(\field)$-orbit representatives in $\mathcal{N}_{3,2}$ given in Proposition~\ref{prop:list of 3x3 nilpotent orbits} 
is as follows:  

\begin{proposition} 
We have $S_{\beta_5}=\{O\}$, 
$S_{\beta_4}=\GL_3(\field)\cdot \{E_\lambda,E_\infty\mid \lambda\in\field\}$, 
 $S_{\beta_3}=\GL_3(\field)\cdot \{D\}$,   
$S_{\beta_2}=\GL_3(\field)\cdot \{C\}$, 
and 
$S_{\beta_1}=\GL_3(\field)\cdot \{A_{\lambda,\mu}, 
A_{\lambda,\infty},A_{\infty,\lambda},B_{\lambda,\mu'},
B_{\infty,\lambda}\mid \lambda,\mu,\mu' \in \field, \mu\neq 0\}$. 
\end{proposition}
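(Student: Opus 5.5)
We determine the stratum of one representative of each letter, after which the whole statement follows. By the remark preceding the statement, all representatives denoted by the same letter from $\{A,B,C,D,E,O\}$ lie in a common Hesselink stratum. Since the strata are disjoint, $\GL_3(\field)$-stable and cover $\mathcal{N}_{3,2}$, and Proposition~\ref{prop:list of 3x3 nilpotent orbits} lists all orbits, it therefore suffices to place one representative per letter. The letter $A$ has to be treated through all three of its families $A_{\lambda,\mu}$, $A_{\lambda,\infty}$, $A_{\infty,\lambda}$, because these generate different subalgebras. The same applies to $B_{\lambda,\mu}$ versus $B_{\infty,\lambda}$, and to $E_\lambda$ versus $E_\infty$; this is harmless, since all the membership checks below are uniform in the parameters. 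The stratum $S_{\beta_5}$ is $\{O\}$, because $\beta_5=0$ gives $U_{\beta_5}=0$, and $O$ is the only closed orbit in the nullcone.

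For each $\beta_i$ I will make Le Bruyn's data explicit for $n=3$. The parabolic $P_\beta$ is read off from the coincidences among the entries of $\beta$. The space $U_\beta$ consists of the pairs whose nonzero entries sit only in positions $(k,l)$ with $k>l$ and $\beta_k-\beta_l\ge$ the relevant threshold; with the normalization of \cite{lebruyn}, the threshold is the value attained on $W_\beta$. The subspace $W_\beta$ is spanned by the positions attaining exactly that value, with the weight $\theta_\beta$ as given there. Concretely:
\begin{itemize}
\item For $\beta_1=(-1,0,1)$, $P_{\beta_1}=\lt_3(\field)$ and $U_{\beta_1}=\slt_3(\field)^2$. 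The space $W_{\beta_1}$ consists of the $(2,1)$ and $(3,2)$ entries of both matrices, which is the representation space of a quiver $1\rightrightarrows 2\rightrightarrows 3$ with dimension vector $(1,1,1)$. I expect $\theta_{\beta_1}$-semistability to mean that both pairs of arrows are nonzero.
\item For $\beta_2=(-2/3,1/3,1/3)$, the Levi is $\GL_1\times\GL_2$. The space $W_{\beta_2}$ is given by the first column below the diagonal, i.e.\ a Kronecker-type quiver with dimension vector $(1,2)$.
\item For $\beta_3=(-1/3,-1/3,2/3)$, the situation is dual: $W_{\beta_3}$ is given by the last row, with dimension vector $(2,1)$.
\item For $\beta_4=(-1/2,0,1/2)$, $W_{\beta_4}$ is the $(3,1)$ entry and $U_{\beta_4}=W_{\beta_4}$ plus possibly nothing else.
\end{itemize}
I will verify this against the numbers in \cite{lebruyn} rather than rederive the Hesselink theory.

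Next I check membership in $U_\beta^{ss}$ for each letter.
\begin{itemize}
\item The representatives $A_{\lambda,\mu}$, $A_{\lambda,\infty}$, $A_{\infty,\lambda}$, $B_{\lambda,\mu}$, $B_{\infty,\lambda}$ all have a nonzero $(2,1)$ entry in some component and a nonzero $(3,2)$ entry in some component. Hence their projections to $W_{\beta_1}$ are semistable, and they lie in $S_{\beta_1}$.
\item For $C=(E_{21},E_{31})$, the first column is $(e_2,e_3)$. This gives a $\theta_{\beta_2}$-semistable Kronecker representation, since the two vectors span $\field^2$. So $C\in S_{\beta_2}$.
\item Dually, $D=(E_{31},E_{32})\in S_{\beta_3}$.
\item Conjugating by the permutation $(23)$, $E_\lambda$ and $E_\infty$ become pairs supported on $E_{31}$, which are semistable for $\beta_4$. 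So they lie in $S_{\beta_4}$.
\end{itemize}
Since the strata are disjoint, these memberships force the stated equalities.

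The main obstacle is getting Le Bruyn's conventions right: which positions make up $U_\beta$ versus $W_\beta$, and the sign and meaning of $\theta_\beta$-semistability for each quiver $Q_\beta$. As a safeguard, I would cross-check the resulting placements with dimensions, using $\dim S_\beta=9-\dim P_\beta+\dim U_\beta$. For instance, $S_{\beta_1}$ should have dimension $9$, matching the $7$-dimensional orbits $A_{\lambda,\mu}$ moving in a $2$-parameter family. I would also check that the placements are consistent with the degeneration orders of Figure~\ref{figure:hasse} and Figure~\ref{figure:hasse2}: $C$ and $D$ lie in incomparable strata, both above the stratum containing the $E$'s. As a further check, I would verify that the one-parameter subgroup $\mathrm{diag}(t^{-1},1,t)$ destabilizes the $A$- and $B$-representatives optimally, while $C$ and $D$ admit no such destabilizing subgroup. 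The reason is that $C$ and $D$ each have a zero block in $W_{\beta_1}$ (the $(3,2)$ entries for $C$, the $(2,1)$ entries for $D$), which is exactly the failure of $\beta_1$-semistability.
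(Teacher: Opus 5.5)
Your proposal is correct and follows the same route as the paper: you make Le Bruyn's data $U_\beta$, $W_\beta$ and the semistability conditions explicit, check that each representative from Proposition~\ref{prop:list of 3x3 nilpotent orbits} lies in $U_\beta^{ss}$ for the appropriate $\beta$, and then use disjointness of the strata to get the equalities. Your conjugation of the $E$'s by the transposition $(23)$ into the $(3,1)$ position is slightly more careful than the paper's wording, but the claim that the three $A$-families ``generate different subalgebras'' is false: they all generate $\mathfrak{n}_3(\field)$, and likewise the two $B$-families share a subalgebra, as do $E_\lambda$ and $E_\infty$. So your family-by-family checks are redundant rather than necessary, which is harmless.
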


\begin{proof} 
$S_{\beta_5}$ is the zero orbit $O$. 
Following the approach of \cite{lebruyn} one can easily see that 
the orbit representatives of type 
\begin{itemize}
    \item $A$ belong to $W_{\beta_1}^{ss}$, 
    \item $C$ belong to $W_{\beta_2}^{ss}=U_{\beta_2}^{ss}$, 
    \item $D$ belong to $W_{\beta_3}^{ss}=U_{\beta_3}^{ss}$, 
    \item $E$ belong to $W_{\beta_4}^{ss}=U_{\beta_4}^{ss}$. 
\end{itemize}
Moreover, the points $B_{\lambda,0}$ and $B_{\infty,0}$ also belong to 
$W_{\beta_1}^{ss}$. Finally, the points $B_{\lambda,\mu'}$ with $\mu' \neq 0$ are contained in $U_{\beta_1}^{ss}\setminus W_{\beta_1}^{ss}$.     \end{proof}

\section{Some other group actions}\label{sec:GL3xGL2}

The general linear group $\GL_2(\field)$ acts on $(\field^{3\times 3})^2$ as follows: 
for $A=(A_1,A_2)\in (\field^{3\times 3})^2$ and 
$g=\begin{bmatrix}g_{11}&g_{12}\\g_{21}&g_{22}\end{bmatrix}$ we set 
$g\cdot A=(g_{11}A_1+g_{12}A_2,g_{21}A_1+g_{22}A_2)$. 
This action commutes with the action of $\GL_3(\field)$, and the two actions combines to an action 
of the direct product $\GL_3(\field)\times \GL_2(\field)$ on $(\field^{3\times 3})^2$. 
Moreover, the subvariety $\mathcal{N}_{3,2}$ is preserved by this action.  

Since the $\GL_3(\field)$-orbits in $\mathcal{N}_{3,2}$ are represented by finitely many 
families of dimension at most $2$, 
it is natural to look for a $2$-dimensional subgroup $H$ in 
$\GL_2(\field)$ such that there are finitely many 
$\GL_3(\field)\times H$-orbits in $\mathcal{N}_{3,2}$. 

\begin{proposition} \label{prop:GL3xH-orbits}
Set $G:=\GL_3(\field)\times H$, where 
\[H:=\left\{ \begin{bmatrix}1&0 \\ \lambda& \mu\end{bmatrix} 
\mid \lambda,\mu\in \field, \mu\neq 0 \right\}.\]
There are $12$ $G$-orbits in $\mathcal{N}_{3,2}$, namely: 
\begin{align*}
G\cdot A_{0,1}&=\GL_3(\field)\cdot 
\{A_{\lambda,\mu}\mid \lambda,\mu\in \field, \mu \neq 0\}\\
G\cdot A_{0,\infty}&=\GL_3(\field)\cdot \{A_{\lambda,\infty}
\mid \lambda\in \field\} \\
G\cdot A_{\infty,0}&=\GL_3(\field)\cdot 
\{A_{\infty,\lambda}\mid \lambda\in\field\} \\
G\cdot B_{0,1}&=\GL_3(\field)\cdot 
\{B_{\lambda,\mu}\mid \lambda,\mu\in\field, \mu\neq 0\} \\
G\cdot B_{0,0}&=\GL_3(\field)\cdot 
\{B_{\lambda,0}\mid \lambda\in \field\} \\
G\cdot B_{\infty,0}&=\GL_3(\field)\cdot B_{\infty,0} \\ 
G\cdot B_{\infty,1}&=\GL_3(\field)\cdot 
\{B_{\infty,\lambda}\mid \lambda\in \field, \lambda\neq 0\} \\
G\cdot C&=\GL_3(\field)\cdot C\\ 
G\cdot D&=\GL_3(\field)\cdot D\\
G\cdot E_0&=\GL_3(\field)\cdot \{E_{\lambda}\mid \lambda\in \field\} \\
G\cdot E_\infty&=\GL_3(\field)\cdot E_\infty \\
G\cdot O&= O
\end{align*}
Moreover, the Hasse diagram of the degeneration order for the 
$G$-orbits is given in Figure~\ref{figure:GL3xH}. 
\end{proposition}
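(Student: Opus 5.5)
The plan exploits the fact that $H$ only changes the second component: $(A_1,A_2)\mapsto (A_1,\lambda A_1+\mu A_2)$. Hence every $G$-orbit is a union of $\GL_3(\field)$-orbits from Proposition~\ref{prop:list of 3x3 nilpotent orbits}, and I would determine which of them get glued together.

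\textbf{Step 1: the union descriptions.} Apply an element of $H$ to each representative and, where needed, conjugate by a diagonal matrix in $\GL_3(\field)$ to return to normal form.
For $A_{\lambda,\mu}=(N,\lambda N+\mu E_{32})$ with $N=E_{21}+E_{32}$, the image is $(N,(\lambda'+\mu'\lambda)N+\mu'\mu E_{32})$. So $H$ acts transitively on the family with $\mu\neq 0$, and the same computation works for $B_{\lambda,\mu}$ with $\mu\neq0$ and for $B_{\lambda,0}$.
For $A_{\lambda,\infty}$, $A_{\infty,\lambda}$ and $E_\lambda$, the element of $H$ removes $\lambda$ and rescales the new summand. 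A diagonal conjugation $\diag(t_1,t_2,t_3)$ then rescales $E_{21}$ and $E_{32}$ independently and brings the coefficient back to $1$.
For $B_{\infty,\lambda}=(\lambda E_{31},N)$, conjugation by $\diag(1,t,t^2)$ scales $E_{31}$ by $t^{-2}$ and $N$ by $t^{-1}$. Compensating with $\mu=t$ in $H$ shows that all $\lambda\neq 0$ lie in one $G$-orbit. The case $\lambda=0$ stays separate, since whether $A_1=0$ is a $G$-invariant property.

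\textbf{Step 2: exactly $12$ orbits.} Both the algebra generated by $A_1,A_2$ and the $\GL_3(\field)$-orbit of $A_1$ are $G$-invariant. The letter is determined by the algebra (its conjugacy class). Within a letter, the Jordan type of $A_1$ (rank $0$, $1$ or $2$) separates the remaining pairs:
\begin{itemize}
\item $A_{\lambda,\mu}$ versus $A_{\lambda,\infty}$ versus $A_{\infty,\lambda}$;
\item $B_{\lambda,\mu}$ versus $B_{\infty,\lambda}$;
\item $B_{\infty,0}$ versus $B_{\infty,1}$;
\item $E_0$ versus $E_\infty$.
\end{itemize}
The remaining pair, $B_{0,1}$ versus $B_{0,0}$, is separated by whether $A_2\in\mathrm{Span}\{A_1\}$, which is also $G$-invariant. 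This gives the $12$ orbits listed.

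\textbf{Step 3: the Hasse diagram.} Orbit closures in a $G$-orbit are $\overline{G\cdot X}=\overline{\GL_3(\field)\cdot(H\cdot X)}$. So $G\cdot Y\subseteq\overline{G\cdot X}$ holds if some $\GL_3(\field)$-orbit in $G\cdot Y$ is a degeneration, in the sense of Figure~\ref{figure:hasse}, of some $\GL_3(\field)$-orbit in $G\cdot X$. In addition there are new degenerations obtained as limits along the families. For example, letting $\mu\to0$ in $A_{\lambda,\mu}$ gives $B_{\lambda,0}$, and $B_{\lambda,\mu}\to B_{\lambda,0}$ as $\mu\to 0$. Similarly, $\lambda\to0$ in $B_{\infty,\lambda}$ gives $B_{\infty,0}$.
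I would list all such limits, each realised by a one-parameter curve in $G$ as in Table~\ref{table:denerations}.
For non-degenerations I would use $G$-invariant closed conditions:
\begin{itemize}
\item $\rank A_1^k\le r$ (for $k=1,2$) is a closed $G$-invariant condition, because $A_1$ is fixed by $H$;
\item $A_1\wedge A_2=0$, i.e. $\dim\mathrm{Span}\{A_1,A_2\}\le1$, is also closed and $G$-invariant;
\item dimension comparison of $G$-orbits.
\end{itemize}
Beyond these, any property "some $\GL_3(\field)$-orbit in $G\cdot X$ degenerates to $Y$" reduces to Figure~\ref{figure:hasse} together with the family limits.

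\textbf{Main obstacle.} The hardest part is showing that the family limits produce no further degenerations. For instance, I would need to show that $G\cdot B_{0,1}$ does not reach $G\cdot E_\infty$, or that $G\cdot A_{0,\infty}$ does not reach $G\cdot B_{\infty,0}$. For each such pair I would first try a single $G$-invariant closed rank condition, of the form $\rank\varphi(A_1)\le r$, or the vanishing of a $2\times2$ minor relation between $A_1$ and $A_2$. If that fails, I would fall back on the dimension count of the $G$-orbits.
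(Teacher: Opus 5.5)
There is a genuine gap. Your Step 1 is correct and essentially matches the paper: $H$ acts transitively on each parametrized family, and a diagonal conjugation renormalizes.

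\textbf{Step 2 contains a false claim.} The first components of $A_{\lambda,\infty}=(E_{21},\lambda E_{21}+E_{32})$ and $A_{\infty,\lambda}=(E_{32},E_{21}+\lambda E_{32})$ both have rank one and square zero, so they are $\GL_3(\field)$-conjugate. Hence the Jordan type of $A_1$ does not separate $G\cdot A_{0,\infty}$ from $G\cdot A_{\infty,0}$. One correct separating invariant is the condition $A_1^2=0=A_1A_2$. It is $G$-invariant, since $A_1(\lambda A_1+\mu A_2)=\mu A_1A_2$ when $A_1^2=0$. It holds for $A_{\lambda,\infty}$ but fails for $A_{\infty,\lambda}$, where $A_1A_2=E_{31}$. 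Simpler still, and this is what the paper does: Step 1 already gives the exact equalities $G\cdot A_{0,\infty}=\GL_3(\field)\cdot\{A_{\lambda,\infty}\mid\lambda\in\field\}$ and so on. Distinctness of the twelve orbits then follows at once from the irredundancy in Proposition~\ref{prop:list of 3x3 nilpotent orbits}.

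\textbf{Step 3 does not establish the Hasse diagram, and the two non-degenerations you name as the main obstacle are false.} Write $X\to Y$ for $G\cdot Y\subseteq\overline{G\cdot X}$. Figure~\ref{figure:GL3xH} contains $B_{0,1}\to B_{\infty,1}\to B_{\infty,0}\to E_\infty$ and $A_{0,\infty}\to B_{\infty,1}\to B_{\infty,0}$. So any attempt to separate those pairs by closed $G$-invariant conditions must fail.

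What is missing is the actual list of covering degenerations, with proofs:
\begin{itemize}
\item The top edges $A_{0,1}\to A_{0,\infty},A_{\infty,0},B_{0,1}$ follow because $\mathcal{N}_{3,2}$ is irreducible and $G\cdot A_{0,1}$ is its dense $9$-dimensional orbit.
\item The edges $B_{0,0},C,D\to E_0$, $B_{\infty,0}\to E_\infty$ and $E_\infty\to O$ come from Figure~\ref{figure:hasse}.
\item The new edges $A_{0,\infty},A_{\infty,0},B_{0,1}\to B_{\infty,1}$, $B_{0,1}\to B_{0,0}$, $B_{0,0}\to B_{\infty,0}$, $B_{\infty,1}\to B_{\infty,0},C,D$ and $E_0\to E_\infty$ need explicit curves in $G$ (Table~\ref{table:GL3xH-degenerations}).
\end{itemize}
Some of these new edges are not limits of your parametrized normal forms. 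For example, $A_{0,\infty}\to B_{\infty,1}$ is not a limit of $(E_{21},\lambda E_{21}+\mu E_{32})$; it needs a curve with $\varepsilon^{-1}$ entries in both the $\GL_3(\field)$ factor and the $H$ factor.

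Once these edges are in place, dimension leaves only five non-degenerations. Your closed conditions do handle them:
\begin{itemize}
\item $A_{0,\infty}$ and $A_{\infty,0}$ do not reach $B_{0,0}$, by the condition $\rank A_1\le 1$.
\item $B_{0,0}$ does not reach $C$ or $D$, by the condition $\dim\mathrm{Span}\{A_1,A_2\}\le 1$.
\item $B_{\infty,0}$ does not reach $E_0$, by the condition $A_1=0$.
\end{itemize}
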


\begin{figure}[ht]
\centering
\caption{Degenerations of $\GL_3(\field)\times H$-orbits}
\vspace{1em}
\begin{tikzpicture}[scale=2, every node/.style={inner sep=2pt}] 
  \node (d1) at (-5, 3) {\textcolor{teal}{9}};
  \node (d2) at (-5, 2) {\textcolor{teal}{8}};
  \node (d3) at (-5, 1) {\textcolor{teal}{7}};
  \node (d4) at (-5, 0) {\textcolor{teal}{6}};
  \node (d5) at (-5,-1) {\textcolor{teal}{5}};
  \node (d6) at (-5,-2) {\textcolor{teal}{4}};
  \node (d7) at (-5, -3) {\textcolor{teal}{0}};

  \draw[dashed] (-4.4, -3.2) -- (-4.4, 3.2);

  \node (A01) at (-1,3) {$G\cdot A_{0,1}$};
  \node (A0inf) at (-3.5, 2) {$G\cdot A_{0, \infty}$};
  \node (Ainf0) at (1.5, 2) {$G\cdot A_{\infty, 0}$};
  \node (B01) at (-1, 2) {$G\cdot B_{0, 1}$};
  \node (Binf1) at (-1, 1) {$G\cdot B_{\infty,1}$};
  \node (B00) at (-3.5, 1) {$G\cdot B_{0,0}$};
  \node (C) at (-1,0) {$G\cdot C$};
  \node (D) at (1.5, -0) {$G\cdot D$};
  \node (Binf0) at (-3.5, -0) {$G\cdot B_{\infty, 0}$};
  \node (E0) at (-1,-1) {$G\cdot E_0$};
  \node (Einf) at (-3.5,-2) {$G\cdot E_\infty$};
  \node (O) at (-1,-3) {$O$};

  \draw(A01) -- (A0inf);
  \draw(A01) -- (Ainf0);
  \draw(A01) -- (B01);
  \draw(B01) -- (B00);
  \draw(B01) -- (Binf1);
  \draw(A0inf) -- (Binf1);
  \draw(Ainf0) -- (Binf1);
  \draw(Binf1) -- (C);
  \draw(Binf1) -- (D);
  \draw(Binf1) -- (Binf0);
  \draw(B00) -- (Binf0);
  \draw(B00) -- (E0);
  \draw(Binf0) -- (Einf);
  \draw(C) -- (E0);
  \draw(D) -- (E0);
  \draw(E0) -- (Einf);
  \draw (Einf) -- (O);
  
\end{tikzpicture}
\label{figure:GL3xH}
\end{figure}

\begin{proof} 
This can be easily deduced from Proposition~\ref{prop:list of 3x3 nilpotent orbits}. 
Indeed, we have $H\cdot A_{0,1}=\{A_{\lambda,\mu}\mid \lambda,\mu\in \field,\mu\neq 0\}$, 
hence $G\cdot A_{0,1}=\GL_3(\field)\cdot 
\{A_{\lambda,\mu}\mid \lambda,\mu\in \field, \mu \neq 0\}$. 
Clearly $\{A_{\lambda,\infty}\mid \lambda\in\field\}
\subseteq H\cdot A_{0,\infty}$, and it is easy to deduce from 
Proposition~\ref{prop:list of 3x3 nilpotent orbits} that 
$H\cdot A_{0,\infty}\subseteq \GL_3(\field)\cdot 
\{A_{\lambda,\infty}\mid \lambda\in\field\}$. 
This shows that 
$G\cdot A_{0,\infty}=\GL_3(\field)\cdot 
\{A_{\lambda,\infty}\mid \lambda\in \field\}$. 
Similar considerations identify which $\GL_3(\field)$-orbits belong to 
the $G$-orbit of each of $A_{\infty,0}$, 
$B_{0,1}$, $B_{0,0}$, $B_{\infty,1}$, $B_{\infty,0}$, 
$C$, $D$, $E_0$, $E_\infty$, $O$. 
Proposition~\ref{prop:list of 3x3 nilpotent orbits} implies then that 
all the $G$-orbits were exhausted. 

The variety $\mathcal{N}_{3,2}$ is irreducible, so it is the closure of 
the maximal $9$-dimensional $G$-orbit. 
In particular, the closure of the $G$-orbit of $A_{0,1}$ contains $A_{\infty,0}$, $A_{0,\infty}$, $B_{0,1}$.   
We know from Figure~\ref{figure:hasse} that the closure of the $\GL_3(\field)$-orbit of $C$ and $D$ 
contains $E_0$, 
the closure of the $\GL_3(\field)$-orbit of $B_{\infty,0}$ contains $E_\infty$, 
the closure of the $\GL_3(\field)$-orbit of $B_{0,0}$ contains $E_0$, 
and the closure of the $\GL_3(\field)$-orbit of $E_{\infty}$ contains $O$. 
We show that the degenerations indicated by the remaining edges in 
Figure~\ref{figure:GL3xH} indeed exist in the following way. For such an edge going down 
from a pair $A$ to a pair $B$ in Figure~\ref{figure:GL3xH} we give in 
Table~\ref{table:GL3xH-degenerations} 
a morphism $\field^\times \to G$, 
$\varepsilon\mapsto g_\varepsilon$ of affine varieties, such that the induced morphism 
$\field^\times \to (\field^{3\times 3})^2$, $\varepsilon\mapsto g_\varepsilon \cdot A$ 
extends to a morphism $\field\to (\field^{3\times 3})^2$, $\varepsilon\mapsto A_\varepsilon$, 
such that $A_0=B$. As for each $\varepsilon \neq 0$, 
$A_{\varepsilon}$ lies in the orbit of $A$, we conclude that $B$ belongs to the Zariski closure 
of the orbit of $A$. 
This then implies that the degenerations indicated in Figure~\ref{figure:GL3xH} all exist. 

For example, that $A_{0,\infty}$ degenerates to $B_{\infty,1}$ can be seen from 
the following equality 
\[\left(\begin{bmatrix}1&0&0\\ \varepsilon^{-1}&\varepsilon&0\\0&1&\varepsilon\end{bmatrix}, 
\begin{bmatrix}1&0\\ \varepsilon^{-1}&1\end{bmatrix}\right)
\cdot A_{0,\infty}=B_{\infty,1}+\varepsilon(E_{21},0)\text{ for }\varepsilon\in \field\setminus\{0\}. \]
Similarly, the equality 
\[\left(\begin{bmatrix}1&0&0\\0& \varepsilon&0 \\0&1&\varepsilon^2\end{bmatrix},
\begin{bmatrix} 1&0\\ \varepsilon^{-1}&-\varepsilon^{-3}\end{bmatrix}\right)
\cdot B_{0,1}=B_{\infty,1}+\varepsilon(E_{21}+E_{32},0) \text{ for }\varepsilon\in\field\setminus\{0\}\]
shows that $B_{0,1}$ degenerates to $B_{\infty,1}$. 

Finally, to show that there are no more degenerations between the $G$-orbits, 
consider the following Zariski closed $G$-invariant subsets of 
$(\field^{3\times 3})^2$: 
\begin{align*}X&:=\{(A_1,A_2)\in (\field^{3\times 3})^2\mid \rank(A_1)\le 1\} \\
Y&:=\{(A_1,A_2)\in (\field^{3\times 3})^2\mid A_1=0\} \\
Z&:= \{(A_1,A_2)\in (\field^{3\times 3})^2\mid \dim_\field \mathrm{Span}\{A_1,A_2)\}\le 1\}
\end{align*} 
Then $A_{0,\infty}, A_{\infty,0}\in X$ and $B_{0,0}\notin X$; 
$B_{0,0}\in Z$ and $C,D\notin Z$; 
$B_{\infty,0}\in Y$ and $E_0\notin Y$. 
\end{proof}
 
To simplify notation, we set 
$\GL_{3,2}:=\GL_3(\field)\times\GL_2(\field)$. 
The number of $\GL_{3,2}$-orbits in $\mathcal{N}_{3,2}$  is $7$: 

\begin{proposition}\label{prop:GL3xGL2}
The $\GL_{3,2}$-orbits in $\mathcal{N}_{3,2}$ are the following: 
\begin{align*}
    \GL_{3,2}\cdot A_{0,1}&=
    \GL_3(\field)\cdot\{A_{\lambda,\mu},A_{\lambda,\infty},A_{\infty,\lambda}\mid
    \lambda,\mu\in\field,\mu\neq 0\}\\
    \GL_{3,2}\cdot B_{0,1}&=
    \GL_3(\field)\cdot\{B_{\lambda,\mu},B_{\infty,\mu}\mid 
    \lambda,\mu\in\field,\mu\neq 0\}\\
    \GL_{3,2}\cdot B_{0,0}&=
     \GL_3(\field)\cdot\{B_{\lambda,0},B_{\infty,0}\mid \lambda\in\field\}\\
     \GL_{3,2}\cdot C&=\GL_3(\field)\cdot C \\
    \GL_{3,2}\cdot D&=\GL_3(\field)\cdot D \\
    \GL_{3,2}\cdot E_0&=\GL_3(\field)\cdot
    \{E_\lambda,E_\infty\mid \lambda\in \field\}\\
    \GL_{3,2}\cdot O&= O
\end{align*}
Moreover, the Hasse diagram of the degeneration order for the 
$\GL_{3,2}$-orbits is given in Figure~\ref{figure:GL3xGL2}. 
    \end{proposition}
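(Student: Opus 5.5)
Since every $\GL_{3,2}$-orbit is a union of $G$-orbits for $G=\GL_3(\field)\times H$, I will derive the statement from Proposition~\ref{prop:GL3xH-orbits}. The task is to decide which of the $12$ $G$-orbits become merged once the remaining elements of $\GL_2(\field)$ are allowed. The group $\GL_2(\field)$ is generated by $H$ together with the swap $s=\begin{bmatrix}0&1\\1&0\end{bmatrix}$, so it is enough to apply $s$ to each $G$-orbit representative.

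\textbf{Merging the orbits.} Applying $s$ to the representatives:
\begin{itemize}
\item $sA_{0,\infty}=(E_{32},E_{21})=A_{\infty,0}$, and $A_{0,\infty}$ lies in the closure of nothing beyond its $G$-orbit. A generic element of $\GL_2(\field)$ applied to $A_{0,\infty}$ gives a pair whose first component is $aE_{21}+bE_{32}$ with $ab\neq 0$. This component has rank $2$, so the pair has type $A_{\lambda,\mu}$. Hence $G\cdot A_{0,1}$, $G\cdot A_{0,\infty}$ and $G\cdot A_{\infty,0}$ merge into one orbit.
\item $sB_{\infty,\lambda}=(E_{21}+E_{32},\lambda E_{31})=B_{0,\lambda}$, so $B_{0,1}$ and $B_{\infty,1}$ merge, and likewise $B_{0,0}$ and $B_{\infty,0}$ merge.
\item $sE_\infty=(E_{21},0)=E_0$, so $E_0$ and $E_\infty$ merge.
\item $C$ and $D$ are already $\GL_{3,2}$-stable: their generated algebras are fixed by $\GL_2(\field)$, and these algebras have non-conjugate stabilizers. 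The same holds for $O$.
\end{itemize}
The three merged classes stay distinct from one another and from $C,D,E_0,O$. This follows from the $\GL_{3,2}$-invariant closed conditions used in the proof of Proposition~\ref{prop:GL3xH-orbits}, for instance $Z$ (span of dimension $\le 1$). It also follows from the algebra generated, which is $\GL_2(\field)$-invariant: $\slt_3$ for the $A$-type orbits versus $\mathrm{Span}\{E_{21}+E_{32},E_{31}\}$ for the $B$-type orbits. This yields exactly the $7$ orbits listed.

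\textbf{Degeneration order.} A $\GL_{3,2}$-orbit closure is the union of the closures of its $G$-orbits, since $\GL_{3,2}$-orbits are finite unions of $G$-orbits and closure commutes with finite unions. So $\mathcal{O}'\subseteq\overline{\mathcal{O}}$ holds iff some $G$-orbit in $\mathcal{O}'$ lies in the closure of some $G$-orbit in $\mathcal{O}$, and the Hasse diagram is obtained by collapsing Figure~\ref{figure:GL3xH}. Reading off the edges gives the chain
\[A\text{-type}\;\to\; B_{0,1}\text{-type}\;\to\; C,\;D,\;B_{0,0}\text{-type}\;\to\; E\;\to\; O.\]
The cover relations come from the edges $B_{\infty,1}\to C$, $B_{\infty,1}\to D$, $B_{\infty,1}\to B_{\infty,0}$, and then $C,D,B_{0,0}\to E_0$.

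\textbf{Main obstacle.} The delicate step is checking that no spurious relations appear, in particular that $C$, $D$ and the $B_{0,0}$-class are pairwise incomparable. This follows from dimensions and invariant closed sets. All three orbits have dimension $7$: $C$ and $D$ have $\GL_3(\field)$-orbit dimension $6$, and $\GL_2(\field)$ moves the pair within the span, adding $1$. So none of them lies in the closure of another.
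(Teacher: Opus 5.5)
Your overall route is the paper's. You merge the $G$-orbits of Proposition~\ref{prop:GL3xH-orbits} using explicit elements of $\GL_2(\field)$, and your swap computations are correct. You separate the seven classes by the conjugacy type of the generated algebra and by $Z$, and you read off the order by collapsing Figure~\ref{figure:GL3xH}. Your justification of the collapse is correct: a $\GL_{3,2}$-orbit closure is $\GL_{3,2}$-stable and is the finite union of the closures of its $G$-orbits. This already shows that $\GL_{3,2}\cdot C$, $\GL_{3,2}\cdot D$ and $\GL_{3,2}\cdot B_{0,0}$ are pairwise incomparable, because in Figure~\ref{figure:GL3xH} neither $B_{0,0}$ nor $B_{\infty,0}$ lies above $C$ or $D$, and conversely.

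However, the paragraph you label as the main obstacle is wrong and should be deleted. The orbits $\GL_{3,2}\cdot C$ and $\GL_{3,2}\cdot D$ have dimension $6$, not $7$, as Figure~\ref{figure:GL3xGL2} records. To see this, let $e_1,e_2,e_3$ be the standard basis. Then $\mathrm{Span}\{E_{21},E_{31}\}=\{ve_1^T\mid v\in\mathrm{Span}\{e_2,e_3\}\}$. Conjugation by the block diagonal matrix $\diag(1,k)$, $k\in\GL_2(\field)$, induces an arbitrary change of basis of this plane. Hence every $h\cdot C$ with $h\in\GL_2(\field)$ already lies in $\GL_3(\field)\cdot C$, and the same holds for $D$ using $\diag(k,1)$. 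This is just your own earlier observation that $\GL_{3,2}\cdot C=\GL_3(\field)\cdot C$.

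With the correct dimensions $7,6,6$, a dimension count cannot exclude $C,D\in\overline{\GL_{3,2}\cdot B_{0,0}}$. That is exactly where the paper uses the closed $\GL_{3,2}$-invariant set $Z$, which contains $B_{0,0}$ but neither $C$ nor $D$. The same fact is already built into Figure~\ref{figure:GL3xH}, which is why your collapse argument is enough on its own.

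A smaller slip: your aside that $A_{0,\infty}$ lies in the closure of nothing beyond its $G$-orbit is false, since $A_{0,\infty}\in\overline{G\cdot A_{0,1}}$. You never use it, though.
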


\begin{proof} 
This follows easily from Proposition~\ref{prop:GL3xH-orbits}. 
Indeed, $A_{0,1}$, $A_{0,\infty}$, $A_{\infty,0}$ belong to the same $\GL_2$-orbit, 
$B_{0,1}$ and $B_{\infty,1}$ belong to the same $\GL_2$-orbit, 
$B_{0,0}$ and $B_{\infty,0}$ belong to the same $\GL_2$-orbit, and 
$E_0$ and $E_\infty$ belong to the same $\GL_2$-orbit. 
It follows by Proposition~\ref{prop:GL3xH-orbits}  
that $A_{1,0}$, $B_{1,0}$, $B_{0,1}$, $C$, $D$, $E_0$, $O$ represent all the 
$\GL_{3,2}$-orbits. 
It remains to show that these $7$ elements of $\mathcal{N}_{3,2}$ have distinct 
$\GL_{3,2}$-orbits. 
Two points in the same $\GL_2(\field)$-orbit have the same stabilizer in $\GL_3(\field)$, 
hence the stabilizers of two points in the same $\GL_{3,2}$-orbit are conjugate in 
$\GL_3(\field)$. It follows that 
any two of the above $7$ elements with different letter type belong to different $\GL_{3,2}$-orbits. 
Finally, all elements in the $\GL_{3,2}$-orbit of $B_{0,0}$ consist of pairs 
$(A_1,A_2)$ with $\dim_\field\mathrm{Span}_\field\{A_1,A_2)=1$, 
whereas all elements in the $\GL_{3,2}$-orbit of $B_{0,1}$ consist of pairs 
$(A_1,A_2)$ with $\dim_\field\mathrm{Span}_\field\{A_1,A_2)=2$. So these two orbits are also different. 

Note that 
$Z:= \{(A_1,A_2)\in (\field^{3\times 3})^2\mid \dim_\field \mathrm{Span}\{A_1,A_2)\}\le 1\}$ 
is a Zariski closed $\GL_{3,2}$-invariant subset containing $B_{0,0}$ and not containing $C$ or $D$, hence 
$C$ and $D$ are not contained in the Zariski closure of the $\GL_{3,2}$-orbit of $B_{0,0}$. 
The degeneration order for the $\GL_{3,2}$-orbits follows from this observation, 
Figure~\ref{figure:GL3xH} and the partition of the 
$\GL_{3,2}$-orbits into orbits over of the subgroup $G$ of $\GL_{3,2}$. 
\end{proof}

\begin{remark} 
It is possible to split the $12$ $G$-orbits in Proposition~\ref{prop:GL3xH-orbits} 
into $18$ families of $\GL_3(\field)$-orbits parametrized by $0$, $1$ and $2$-dimensional 
tori. The Hasse diagram of a similar stratification into $18$ strata of $\mathcal{N}_{3,2}$ is given by Kraft in 
\cite[Page 202]{kraft} (without providing the details).  
\end{remark}

\begin{figure}[ht]
\centering
\caption{Degenerations of $\GL_3(\field)\times \GL_2(\field)$-orbits}
\vspace{1em}
\begin{tikzpicture}[scale=2, every node/.style={inner sep=2pt}]  
  \node (d1) at (-5, 3) {\textcolor{teal}{9}};
  \node (d2) at (-5, 2) {\textcolor{teal}{8}};
  \node (d3) at (-5, 1) {\textcolor{teal}{7}};
  \node (d4) at (-5, 0) {\textcolor{teal}{6}};
  \node (d5) at (-5,-1.5) {\textcolor{teal}{5}};
  \node (d6) at (-5,-2.5) {\textcolor{teal}{0}};
 
  \draw[dashed] (-4.4, -2.7) -- (-4.4, 3.2);

  \node (A01) at (-1,3) {$\GL_{3,2}\cdot A_{0,1}$};
  \node (B01) at (-1, 2) {$\GL_{3,2}\cdot B_{0, 1}$};
  \node (B00) at (-3.5, 1) {$\GL_{3,2}\cdot B_{0,0}$};
  \node (C) at (-1,0) {$\GL_{3,2}\cdot C$};
  \node (D) at (1.5, 0) {$\GL_{3,2}\cdot D$};  
  \node (E) at (-1,-1.5) {$\GL_{3,2}\cdot E_0$};  
  \node (O) at (-1,-2.5) {$O$};

  \draw(A01) -- (B01);
  \draw(B01) -- (B00);
  \draw(B01) -- (C);
  \draw(B01) -- (D);
  \draw(B00) -- (E);
  \draw(C) -- (E);
  \draw(D) -- (E);
  \draw(E) -- (O);
  
\end{tikzpicture}
\label{figure:GL3xGL2}
\end{figure}

\newpage

\section{Tables}

\begin{table*}[ht]
\centering
\small
\caption{Orbit separation with rank inequalities}
\label{Tab2}
{
\renewcommand{\arraystretch}{2} 
\setlength{\tabcolsep}{10pt} 
\begin{tabular}{|c||c||c||c||c||c|}
\hline
\textbf{\#} & $A$ & $B$ & $\varphi \in \field \langle x_1, x_2\rangle$ & $\rank \varphi(A)$ & $\rank \varphi(B)$ \\
\hline
\hline
\multirow{2}{*}{A1} & \multirow{2}{*}{$A_{\lambda, \mu}$} & \multirow{2}{*}{$A_{\lambda^{\prime}, \mu^{\prime}}$} & $x_2 - \lambda x_1 \quad \text{ if } \lambda \neq \lambda^{\prime}$ & $1$ & $2$ \\
\cline{4-6}
& & & $x_2 - \lambda x_1 - \mu x_1 \quad \text{ if } \lambda = \lambda^{\prime}$ & $1$ & $2$ \\
\hline
A2 & $A_{\lambda, \infty}$ & $A_{\lambda^{\prime}, \infty}$ & $x_2 - \lambda x_1$ & $1$ & $2$ \\
\hline
A3 & $A_{\infty, \lambda}$ & $A_{\infty, \lambda^{\prime}}$ & $x_2 - \lambda x_1$ & $1$ & $2$ \\
\hline
A4 & $A_{\lambda, \mu}$ & $A_{\lambda^{\prime}, \infty}$ & $x_1$ & $2$ & $1$ \\
\hline
A5 & $A_{\lambda, \mu}$ & $A_{\infty, \lambda^{\prime}}$ & 
$x_1$ & $2$ & $1$ \\
\hline
A6 & $A_{\lambda, \infty}$ & $A_{\infty, \lambda^{\prime}}$ & $x_1 x_2$ & $0$ & $1$ \\
\hline
\hline
B1 & $B_{\lambda,\mu}$ & $B_{\lambda',\mu'}$ &  $x_2 - \lambda x_1-\mu x_1^2$ & $0$ & $\ge 1$ \\ 
\hline
B2 & $B_{\infty, \lambda}$ & $B_{\infty, \lambda^{\prime}}$ & $x_1 - \lambda x_2^{2}$ & $0$ & $1$ \\
\hline
B3 & $B_{\lambda, \mu}$ & $B_{\infty, \lambda'}$ & $x_1$ & $2$ & $\le 1$ \\
\hline
\hline
E1 & $E_{\lambda}$ & $E_{\lambda^{\prime}}$ & $x_2 - \lambda x_1$ & $0$ & $1$ \\
\hline
E2 & $E_{\lambda}$ & $E_{\infty}$ & $x_1$ & $1$ & $0$ \\
\hline
\end{tabular}
}
\end{table*}

\newpage

\begin{table*}[ht]
\centering
\small
\caption{Curves for the degenerations}
\label{table:denerations}
\centering
{\renewcommand{\arraystretch}{2}
\setlength{\tabcolsep}{8pt}
\begin{tabular}{|c||c||c||c|}
\hline
\textbf{\#} & $A$ & $B$ & $g_\varepsilon$ \\
\hline\hline
1 & $A_{\lambda, \mu}$ & $D$ & $\diag(1, \varepsilon(\lambda + \mu), \varepsilon) + \varepsilon E_{12} + \lambda E_{21} + E_{32}$ \\
\hline\hline
2 & $A_{\lambda, \infty}$ & $D$ & $\diag(1, \varepsilon, \varepsilon) + \lambda  E_{21} +  E_{32}$ \\
\hline\hline
3 & $A_{\infty, \lambda}$ & $D$ & $\diag(0, \varepsilon \lambda, \varepsilon) + \varepsilon E_{12} + E_{21} + E_{32}$ \\
\hline\hline
4 & $A_{\lambda, \mu}$ & $C$ & $\diag(1, 1+\lambda\mu^{-1}, -\varepsilon)
+ \varepsilon^{-1} \mu^{-1} E_{21} - \mu^{-1} E_{32} +\varepsilon \lambda E_{23}$ \\
\hline\hline
5 & $A_{\lambda, \infty}$ & $C$ & $\diag(\varepsilon, \varepsilon, \varepsilon^2) - E_{21} - \varepsilon^2 \lambda E_{23}$ \\
\hline\hline
6 & $A_{\infty, \lambda}$ & $C$ & $\diag(0, -\varepsilon \lambda, 0) - \varepsilon^{2}(E_{12} + E_{23}) + E_{31} + \varepsilon E_{32}$ \\
\hline\hline
7 & $B_{\lambda, \mu}$ & $E_\lambda$ & $\diag(1, 1, \varepsilon)$ \\
\hline\hline
8 & $D$ & $E_{\lambda}$ & $\diag(\varepsilon, 0, 0) + \varepsilon E_{23} - \lambda E_{31} + E_{32}$ \\
\hline\hline
9 & $C$ & $E_{\lambda}$ & $\diag(1, 1, \varepsilon) +  \lambda E_{23}$ \\
\hline\hline
10 & $D$ & $E_{\infty}$ & $E_{12} + E_{23} + \varepsilon^{-1} E_{31}$ \\
\hline\hline
11 & $C$ & $E_{\infty}$ & $\diag(\varepsilon^{-1}, 1, \varepsilon) + \varepsilon^{-1} E_{23}$ \\
\hline\hline
12 & $B_{\infty, \lambda}$ & $E_{\infty}$ & $\diag(1, 1, \varepsilon)$ \\
\hline
\end{tabular}
}
\end{table*}

\newpage

\begin{table*}[ht]
\centering
\small
\caption{Rank conditions for the hom order}
\label{table:hom order}
\centering
{\renewcommand{\arraystretch}{2}
\setlength{\tabcolsep}{8pt}
\begin{tabular}{|c||c||c||c||c||c|}
\hline
\textbf{\#} & $A$ & $B$ & $\varphi\in \field\langle x_1, x_2\rangle$ & $\rank\varphi(A)$ & $\rank\varphi(B)$ \\
\hline\hline
1 & $A_{\infty, \lambda}$ & $B_{\lambda', \mu}$ & $x_1$ & $1$ & $2$\\
\hline\hline
2 & $A_{\infty, \lambda}$ & $B_{\infty, \lambda'}$ & $x_2 - \lambda x_1$ & $1$ & $2$ \\
\hline\hline
\multirow{2}{*}{3} &
\multirow{2}{*}{$A_{\lambda, \mu}$} & \multirow{2}{*}{$B_{\lambda', \mu'}$} & 
$ x_2 - \lambda x_1  \quad \text{if } \lambda \neq \lambda'$ & $1$ & $2$ \\
\cline{4-6}
& & & $(1 +\frac{\lambda}{\mu}) x_1 - \frac{1}{\mu} x_2 \quad \text{if } \lambda = \lambda'$ & $1$ & $2$ \\
\hline\hline
4 & $A_{\lambda, \mu}$ & $B_{\infty, \lambda'}$ & $x_2 - \lambda x_1$ & $1$ & $2$ \\
\hline\hline
5 & $A_{\lambda, \infty}$ & $B_{\lambda', \mu}$ & $x_1$ & $1$ & $2$ \\
\hline\hline
6 & $A_{\lambda, \infty}$ & $B_{\infty, \lambda'}$ & $x_2 - \lambda x_1$ & $1$ & $2$ \\
\hline\hline
7 & $B_{\lambda, \mu}$ & $E_{\lambda^{\prime}}$ & $x_2 - \lambda x_1 - \mu x_1^{2}$ & $0$ & $1$\\
\hline\hline
8 & $B_{\lambda, \mu}$ & $E_{\infty}$ & $x_2 - \lambda x_1 - \mu x_1^{2}$ & $0$ & $1$\\
\hline\hline
\multirow{2}{*}{9} &
\multirow{2}{*}{$B_{\infty, \lambda}$} & \multirow{2}{*}{$E_{\lambda^{\prime}}$} & 
$ x_1  \quad \text{if } \lambda = 0$ & $0$ & $1$ \\
\cline{4-6}
& & & $x_1 - \lambda x_2^{2} \quad \text{if } \lambda \neq 0$ & $0$ & $1$ \\
\hline
\end{tabular}
}
\end{table*}

\newpage

\begin{table*}[ht]
\centering
\small
\caption{Curves for the $\GL_3(\field)\times H$-degenerations}
\label{table:GL3xH-degenerations}
\centering
{\renewcommand{\arraystretch}{1.3}
\setlength{\tabcolsep}{8pt}
\begin{tabular}{|c||c||c||c|}
\hline
\textbf{\#} & $A$ & $B$ & $g_\varepsilon$ \\
\hline\hline
1 & $A_{0, \infty}$ & $B_{\infty,1}$ & $\left(\begin{bmatrix}1&0&0\\ \varepsilon^{-1}&\varepsilon&0\\0&1&\varepsilon\end{bmatrix}, 
\begin{bmatrix}1&0\\ \varepsilon^{-1}&1\end{bmatrix}\right)$ 
\\
\hline
2 & $B_{0, 1}$ & $B_{0,0}$ & $\left(\diag(1, 1, 1), 
\begin{bmatrix}1&0\\ 0&\varepsilon\end{bmatrix}\right)$
\\
\hline
3 & $B_{0,1}$ & $B_{\infty,1}$ & 
$\left(\begin{bmatrix}1&0&0\\0& \varepsilon&0 \\0&1&\varepsilon^2\end{bmatrix},
\begin{bmatrix} 1&0\\ \varepsilon^{-1}&-\varepsilon^{-3}\end{bmatrix}\right)$\\ 
\hline
4 & $A_{\infty, 0}$ & $B_{\infty,1}$ & 
$\left(\begin{bmatrix}1&0&0\\-\varepsilon^{-1}&1&0 \\\varepsilon^{-2}&-\varepsilon^{-1}&\varepsilon\end{bmatrix},
\begin{bmatrix} 1&0\\ \varepsilon^{-1}&1 \end{bmatrix}\right)$\\ 
\hline
5 & $B_{0,0}$ & $B_{\infty,0}$ & 
$\left(\diag(\varepsilon^{-2}, \varepsilon^{-1}, 1),
\begin{bmatrix} 1&0\\ \varepsilon^{-1}&1 \end{bmatrix}\right)$\\ 
\hline

6 & $B_{\infty, 1}$ & $B_{\infty,0}$ & 
$\left(\diag(1, \varepsilon, \varepsilon^2),\begin{bmatrix} 1&0\\ 0&\varepsilon^{-1} \end{bmatrix}\right)$  \\
\hline
7 & $B_{\infty, 1}$ & $C$ & 
$\left(\begin{bmatrix}1&0&0\\0&0&1 \\ 0&\varepsilon^{-1}&0\end{bmatrix},
\begin{bmatrix} 1&0\\ 0&\varepsilon \end{bmatrix}\right)$\\ 
\hline
8 & $B_{\infty,1}$ & $D$ & 
$\left(\diag(1,\varepsilon,1) , \begin{bmatrix} 1&0\\ 0&\varepsilon \end{bmatrix}\right)$\\ 
\hline
9 & $E_0$ & $E_\infty$ & 
$\left(\diag(1, \varepsilon, 1), \begin{bmatrix} 1&0\\ \varepsilon^{-1}&1 \end{bmatrix}\right)$ \\
\hline
\end{tabular}
}
\end{table*}


\newpage


\begin{thebibliography}{ccc}

\bibitem{artin} M. Artin, \textit{On Azumaya algebras and finite dimensional
representations of rings}, J. Algebra 11 (1969), 532-563.  


\bibitem{Auslander}
M. Auslander, \textit{Representation theory of finite-dimensional algebras}, Contemp. Math. 13 (1982), 27-39.

\bibitem{Belitskii} 
G. Belitskii, \textit{Normal forms in matrix spaces}, 
Integral Equations and Operator Theory 38 (2000), 251-283. 

\bibitem{Bongartz1}
K. Bongartz, \textit{Degenerations for representations of tame quivers}, Ann. scient. Ec. Norm. Sup., 4. serie, t. 28, (1995), 647-668.

\bibitem{Bongartz:Comm.Alg}
K. Bongartz, \textit{A remark on Friedland's stratification of varieties of modules}, 
Comm. Algebra 23 (1995), 2163-2165. 


\bibitem{Bongartz2} 
K. Bongartz, \textit{On degenerations and extensions of finite dimensional modules}, 
Adv. Math. 121 (1996), 245-287. 

\bibitem{Bongartz-Friedland} 
K. Bongartz, S. Friedland, 
\textit{Complete invariants for simultaneous similarity}, 	arXiv:2601.00379

\bibitem{Calderon-Lopatin} 
J. J. Calderón, A. Lopatin, 
\textit{Pairs of matrices with simple spectrum}, 
arXiv:2310.00476.

\bibitem{CH85}
R. E. Curto, D. A. Herrero, \textit{On closures of joint similarity orbits}, Integral Equations Operator
Theory 8 (1985) 489–556.

\bibitem{DKMV}
H. Derksen, I. Klep, V. Makam, J. Volčič, \textit{Ranks of linear matrix pencils separate simultaneous similarity orbits}, Advances in Mathematics. 415. 108888. 10.1016/j.aim.2023.108888, 2023.

\bibitem{Drozd}
Ju. A. Drozd, \textit{Tame and wild matrix problems, Representation theory II}, 242–258, Lecture
Notes in Math. 832, Springer, 1980.

\bibitem{Forbregd}
T. A. Forbregd, N. M. Nornes, S. O. Smalø,
\textit{Partial orders on representations of algebras},
Journal of Algebra 323 (2010) 2058–2062.

\bibitem{Friedland}
S. Friedland, \textit{Simultaneous similarity of matrices}, Adv. Math. 50 (1983) 189–265.

\bibitem{HL03}
D. Hadwin, D. R. Larson, \textit{Completely rank-nonincreasing} linear maps, J. Funct. Anal. 199 (2003) 210–227.

\bibitem{Hesselink:2} W. Hesselink, \textit{Desingularization of varieties of nullforms}, Invent. Math. 55 (1977), 141-163.


\bibitem{Hesselink:1} W. H. Hesselink, \textit{Uniform instability in reductive groups}, J. Reine Angew. Math. 304 (1978), 74-96.


\bibitem{kempf} G. Kempf, \textit{Instability in invariant theory}, Ann. Math. 108 (1978), 299-316.

\bibitem{kempf-ness} G. Kempf, L. Ness, \textit{The length of vectors in representation spaces}, Algebraic Geometry,
Lecture Notes in Mathematics 732, Springer, Berlin/Heidelberg, 1979, 233-243.

\bibitem{king} A. D. King, \textit{Moduli of representations of finite dimensional algebras}, Quart. J. Math. Oxford 45 (1994), 515-530.

\bibitem{kirwan} F. Kirwan, \textit{Cohomology of Quotients in Symplectic and Algebraic Geometry}, Mathematical Notes
31, Princeton University Press, Princeton, NJ, 1984. 


\bibitem{kraft} H. Kraft, 
\textit{Geometric methods in representation theory}, in: 
Representations of algebras, 3rd int. Conf., Puebla/Mex. 1980, Lect. Notes Math. 944 (1982), 
180-258. 

\bibitem{landsberg} 
J. M. Landsberg, \textit{Geometric complexity theory: an introduction
for geometers}, 
Ann. Univ. Ferrara 61 (2015), 65-117. 

\bibitem{lebruyn2}
L. Le Bruyn, \textit{Orbits of matrix tuples}, Alg\`ebre non commutative, groupes quantiques et invariants
(Reims, 1995), 245-261, S\'emin. Congr. 2, Soc. Math. France, Paris, 1997.

\bibitem{lebruyn} 
L. Le Bruyn, \textit{Nilpotent representations}, J. Algebra 
197 (1997), 153-177.

\bibitem{lewin} 
J. Lewin, \textit{Free modules over free algebras and free group algebras: the Schreier technique}, 
Trans. Am. Math. Soc. 145 (1969), 455-465. 

\bibitem{MFK94}
D. Mumford, J. Fogarty, F. Kirwan, \textit{Geometric invariant theory}, Ergebnisse der Mathematik und ihrer Grenzgebiete 34, Springer, 1994.

\bibitem{ness} L. Ness, \textit{A stratification of the null cone via the moment map (with an appendix by D. Mumford)},
Amer. J. Math. 106 (1984), 1281-1329.

\bibitem{Nornes}
N. M. Nornes, \textit{Degeneration and related partial
orders in representation theory}, Doctoral theses at NTNU, 2015:265.

\bibitem{Riedtmann1}
C. Riedtmann, \textit{Degenerations for representations of quivers with relations}, Ann. Sci. ´Ecole Norm. Sup. (4) 19 (1986) 275–301.

\bibitem{Smalo}
S. O. Smalø, \textit{Degenerations of representations of associative algebras}, Milan Journal of Math. 76, 1 (2008), 135-164.

\bibitem{Zwara:PAMS} G. Zwara, \textit{Degenerations for modules over representation-finite algebras}, 
Proc. Am. MAth. Soc. 127 (1999), 1313-1322. 

\bibitem{Zwara1}
G. Zwara, \textit{Degenerations of finite-dimensional modules are given by extensions},
Composito Math. 121 (2000), 205–218.

\end{thebibliography}
 \end{document}